\title{Twisted and Non-Twisted Deformed Virasoro Algebra \\via Vertex Operators of $\qsl$}
\author{Mikhail Bershtein \and Roman Gonin}
\theoremstyle{definition}
\newtheorem{definition}{Definition}
\theoremstyle{plain}
\newtheorem{theorem}{Theorem}
\newtheorem{proposition}{Proposition}
\newtheorem{corollary}[proposition]{Corollary}
\newtheorem{lemma}[proposition]{Lemma}
\theoremstyle{remark}
\newtheorem{remark}{Remark}
\newtheorem{conjecture-prop}{Conjecture-Proposition}
\numberwithin{proposition}{section}
\numberwithin{definition}{section}
\newcommand{\fp}{{\beta}}
\newcommand{\pp}{{\alpha}_{\psi}}
\newcommand{\ff}{{\alpha}_{\phi}}
\DeclareMathOperator{\res}{res}
\DeclareMathOperator{\Hom}{Hom}
\newcommand{\dood}{\delta_{\text{odd}}}
\newcommand{\asl}{\widehat{\mathfrak{sl}}_2}
\newcommand{\qsl}{U_q(\widehat{\mathfrak{sl}}_2)}
\newcommand{\vac}{| \varnothing \rangle }
\numberwithin{equation}{section}
\newcommand{\Vir}{\mathsf{Vir}_{q_1,q_2}}
\newcommand{\Virt}{\mathsf{Vir}^{\text{tw}}_{q_1,q_2}}
\newcommand{\RepV}{\mathcal{F}_{u}^{[j]}}
\newcommand{\RepVt}{\mathcal{F}^{[i]}}
\newcommand{\jvec}{|j \rangle}
\newcommand{\tVerm}{M^{\text{tw}}}
\newcommand{\degp}{\deg_{\text{pr}}}
\begin{document}
	
	\maketitle
	\begin{abstract}
		The work is devoted to a probably new connection between deformed Virasoro algebra and quantum $\asl$. We give an explicit realization of Virasoro current via vertex operators of level 1 integrable representation of $\asl$. The same is done for a twisted version of deformed Virasoro algebra.   
	\end{abstract}
	
	\section{Introduction}
	The work is devoted to a probably new connection between deformed Virasoro algebra and quantum $\asl$. More specifically, we use integrable representations  $V(\Lambda_0)$ and $V(\Lambda_1)$ of quantum $\asl$ on level 1, they possess realization in terms of Heisenberg algebra. Also there are vertex operators
	\begin{align} \label{eq: intertwiner intro}
	\Phi(z)& \colon V(\Lambda_i) \rightarrow V(\Lambda_{1-i}) \otimes V_{z}, & \Psi(z) &\colon V(\Lambda_i) \rightarrow V_{z}  \otimes V(\Lambda_{1-i}).
	\end{align}
	for evaluation representation $V_z$ of $\asl$. The main result is realization of deformed Virasoro algebra in terms of these vertex operators see Theorem \ref{th: Vir non twist} and Theorem \ref{th: Vir twist}. Let us remark that deformed Virasoro algebra depend on parameters $q_1$, $q_2$, $q_3$ such that $q_1 q_2 q_3 =1$. It turns out, that deformed Virasoro algebra is connected with $\qsl$ for $q =q_3^{1/2}$.
	
	To be more precise, in Theorem \ref{th: Vir twist} we have a realization of twisted deformed Virasoro algebra. This algebra was defined in \cite[(37)--(38)]{Sh} but its bosonization was unknown.
	
	In Theorem \ref{th: Vir non twist} we have constructed realization of ordinary (non-twisted)  deformed Virasoro algebra defined in \cite{SKAO}. A bosonization of this algebra is known since \cite{SKAO}, but our bosonization is a different one. The bosonization from \cite{SKAO} can be also realized by the same formula as in Theorem \ref{th: Vir non twist} but using another vertex operators \cite{Ding1997} defined by \eqref{eq: intertwiner intro} with respect to Drinfeld coproduct.
	
	The existence of two realizations (our Theorems \ref{th: Vir non twist} and \ref{th: Vir twist}) is similar to existence of two choices of twist in XXZ model, see e.g. \cite[eq. 4.3]{Maillet2018}.
	
	\paragraph{Further development} Deformed Virasoro algebra is a particular case of $W_{q_1,q_2}(\mathfrak{sl}_n)$ for $n=2$. Twisted deformed Virasoro is a particular case of twisted $W$-algebras $W_{q_1,q_2}(\mathfrak{sl}_n, n_{tw})$ for $n=2$ and $n_{tw}=1$. Generally $n_{tw}$ is a parameter of twist, i.e. for $n_{tw}=0$ we obtain non-twisted $W$-algebra. We expect that one can construct bosonization of $W_{q_1,q_2}(\mathfrak{sl}_n, n_{tw})$ algebra from vertex operators of quantum $\widehat{\mathfrak{sl}}_n$ on the level 1 (see \cite{Koyama1993}).
	
	Also we expect that tensor product of $W_{q_1,q_2}(\mathfrak{sl}_n, n_{tw})$ with Heisenberg algebra $\mathsf{Heis}$ are certain quotients of toroidal algebra $U_{q_1,q_2,q_3}(\ddot{\mathfrak{gl}}_1)$; for non-twisted case such relation is known \cite{FHSSY}, \cite{FFJMM}, \cite{N16}. Hence a representation of $W_{q_1,q_2}(\mathfrak{sl}_n, n_{tw}) \otimes \mathsf{Heis}$ becomes a representation of  $U_{q_1,q_2,q_3}(\ddot{\mathfrak{gl}}_1)$ automatically. We expect that for $\gcd(n,n_{tw})=1$, the discussed above bosonized representation of $W_{q_1,q_2}(\mathfrak{sl}_n, n_{tw})$ will lead to Fock modules of $U_{q_1,q_2,q_3}(\ddot{\mathfrak{gl}}_1)$ with slope $n_{tw}/n$. For the case $q_3=1$ all this was done in \cite{BG}. Let us remark $W_{q_1,q_2}(\mathfrak{sl}_n, n_{tw}) \otimes \mathsf{Heis}$ acts on integrable level 1 representations of $U_q (\widehat{\mathfrak{gl}}_n) = U_q (\widehat{\mathfrak{sl}}_n) \otimes \mathsf{Heis}$ if this holds without $\mathsf{Heis}$ factors.
	
	One of our motivations for this project comes from \cite{GN15}. It was conjectured in \emph{loc. cit.} that there is an action (with certain properties) of $U_{q}\widehat{\mathfrak{gl}}_n$ on the Fock module of toroidal algebra $U_{q_1,q_2,q_3}(\ddot{\mathfrak{gl}}_1)$ with slope $n'/n$. As it was explained above, we also expect $U_{q}\widehat{\mathfrak{gl}}_n$ acts on the Fock module of toroidal algebra $U_{q_1,q_2,q_3}(\ddot{\mathfrak{gl}}_1)$. So we hope that both actions exist and coincide.
	
	\paragraph{Our methods.} The main technical tool of our paper is R-matrix relations (Theorems \ref{Th: R-matrix for Phi} and \ref{Th: R-matrix for Psi}). One can find these relations without delta-function term in \cite{JM}. In \emph{loc. cit.} the parameters of vertex operator are numbers, but in our paper the parameters are formal variables. Therefore our formulas are close to formulas in \emph{loc. cit.}, but have a different meaning and probably are new. 
	
	Technically, we write down formulas \eqref{eq: realization Vir non twist} and \eqref{eq: realization Vir twist} for current of deformed Virasoro algebra $T(z)$ via vertex operators $\Phi(z)$ and $\Psi^*(z)$, and then we check relations of deformed Virasoro algebra using interchanging relations for vertex operators. Delta-function term on RHS of deformed Virasoro relation appears from the delta-function term in the R-matrix relation.
	\paragraph{Plan of the paper.} The paper is organized as follows
	
	In Section \ref{sec: boson of sl} we recall bosonization of $\qsl$ and its vertex operators following \cite{JM}.
	
	In Section \ref{Section: JM} we study relations for vertex operators: interchanging relations (in particular R-matrix relations), and `special point relations'.
	
	In Section \ref{sec: Vir} we construct realizations of twisted and non-twisted Virasoro algebra via vertex operators of $\qsl$. A connection of obtained representations with Verma modules is studied.
	
	\paragraph{Acknowledgments} We are grateful to B.~Feigin,  E.~Gorsky, M.~Jimbo, M.~Lashkevich, A.~Negu\cb{t}, Y.~Pugai, J.~Shiraishi for interest to our work and discussions. The work is supported by the Russian Foundation of Basic Research under grant mol\_a\_ved  18-31-2006. R.G. was also supported by the ``Young Russian Mathematics" award.
	
	\section{Bosonization of $\qsl$ and its vertex operators} \label{sec: boson of sl}
	In this section we will recall the bosonization of the level 1 representations of $\qsl$ and its vertex operators. All this can be found in \cite[Chapters 5,6]{JM}. Our notation almost coincide with \cite{JM}; however there are differences in normalization of vertex operators.
	\paragraph{Fock modules.} Algebra $\qsl$ is generated by $x_k^{\pm}$, $a_l$ for $k \in \mathbb{Z}$, $l \in \mathbb{Z}_{\neq 0}$, $K^{\pm 1}$ and central elements $\gamma^{\pm 1/2}$. These elements are called \emph{Drinfeld generators}. The relations are \cite[(5.3)--(5.7)]{JM}, although let us recall 
	\begin{equation}
	[a_k, a_l]= \delta_{k+l,0}\frac{[2k]}{k} \frac{\gamma^k - \gamma^{-k}}{q - q^{-1}},
	\end{equation}
	here $[n]=(q^n-q^{-n})/(q-q^{-1})$.
	
	Denote by $\Lambda_0$, $\Lambda_1$ fundamental weights of $\widehat{\mathfrak{sl}}_2$ and by $\alpha$ root of $\mathfrak{sl}_2 \subset \widehat{\mathfrak{sl}}_2$
	The algebra $\qsl$ admits two basic representations $V(\Lambda_0)$ and $V(\Lambda_1)$. As vector spaces
	\begin{equation}
	V(\Lambda_i) = \mathbb{C}[a_{-1}, a_{-2}, \dots ] \otimes \left( \oplus_{n} \mathbb{C} e^{\Lambda_i + n \alpha} \right).
	\end{equation}
	As representations of Heisenberg subalgebra $a_k$, these modules are infinite sums of Fock modules 
	\begin{equation} \label{notatio: Vj}
	V_j = \mathbb{C}[a_{-1}, a_{-2}, \dots ] \otimes \mathbb{C} e^{\Lambda_i + \lfloor \frac{j}{2} \rfloor \alpha} \quad \text{for $i \equiv j \bmod 2$.}
	\end{equation}
	Let us define operators $e^{\pm \alpha}$ and $\partial$ as follows
	\begin{equation}
	e^{\pm \alpha} \left( f \otimes e^{\beta} \right) = f \otimes e^{\beta \pm \alpha}, \quad \quad \partial \left( f \otimes e^{\beta} \right) = (\alpha, \beta) f \otimes e^{\beta}.
	\end{equation}
	The action of other $\qsl$ generators is given by
	\begin{align}
	&K =q^{\partial}, \quad \gamma = q, 
	\\
	&X^{\pm}(z)= \exp\left( \pm \sum_{n=1}^{\infty} \frac{a_{-n}}{[n]} q^{\mp n/2} z^n \right) \exp \left( \mp \sum_{n=1}^{\infty} \frac{a_n}{[n]} q^{\mp n/2} z^{-n}\right) e^{\pm \alpha} z^{\pm \partial},
	\end{align}
	here $X^{\pm}(z) = \sum x_k^{\pm} z^{-k-1}$. Obtained representations are irreducible highest weight representations with highest vectors $|\Lambda_i \rangle = 1 \otimes e^{\Lambda_i} \in V(\Lambda_i)$.
	\paragraph{Vertex operators.} Vertex operator of $\qsl$ are certain formal power series of operators 
	\begin{align}
	\Phi^{(1-i, i)}_{\pm}(z) &\colon V(\Lambda_i) \rightarrow V(\Lambda_{1-i}), & \Psi^{(1-i, i)}_{\pm}(z) &\colon V(\Lambda_i) \rightarrow V(\Lambda_{1-i}).
	\end{align}
	Below we will abbreviate $\Phi_{\pm}(z) = \Phi^{(1-i, i)}_{\pm}(z)$, if a statement holds for both $i=0,1$.
	
	A conceptual definition of these operators via certain intertwiner relations is given in \cite[Chapter 6]{JM}. For us it is more convenient to give an \emph{ad hoc} definition
	\begin{align}
	\Phi_-(z) =& \exp \left( \sum_{n=1}^{\infty} \frac{a_{-n}}{[2n]} q^{7n/2} z^n \right) \exp \left(- \sum_{n=1}^{\infty} \frac{a_{n}}{[2n]} q^{-5n/2} z^{-n} \right) e^{\alpha/2} (-q^3 z)^{\partial/2}, \label{eq: Phi- boson} \\
	\Psi_+(z) =& \exp \left( - \sum_{n=1}^{\infty} \frac{a_{-n}}{[2n]} q^{n/2} z^n \right) \exp \left( \sum_{n=1}^{\infty} \frac{a_{n}}{[2n]} q^{-3n/2} z^{-n} \right) e^{-\alpha/2} (-q z)^{-\partial/2}.
	\label{eq: Psi+ boson}
	\end{align}
	Operator $\Phi_+(z)$, $\Psi_-(z)$ are determined by each of these formulas
	\begin{align}
	\Phi_+ (z) &= [\Phi_- (z) , x_0^- ]_{q},  & q^2 z \Phi_+ (z) =& [ \Phi_- (z), x_1^- ]_{q^{-1}}, \label{eq: Phi min x_0}\\
	\Psi_-(z) &= [ \Psi_+(z),  x_0^+]_{q},  & (q^2 z)^{-1} \Psi_-(z) =& [\Psi_+(z), x_{-1}^+]_{q^{-1}}. \label{eq: Psi min def}
	\end{align}
	here we use following notation $[A,B]_{p}= AB-pBA$.
	
	We will also need the dual operator $\Psi_{\varepsilon}^*(z) = \Psi_{-\varepsilon}(q^2 z)$. Then
	\begin{equation}
	\Psi^*_-(z) = \exp \left( - \sum \frac{a_{-n}}{[2n]} q^{5n/2} z^n \right) \exp \left( \sum \frac{a_{n}}{[2n]} q^{-7n/2} z^{-n} \right) e^{-\alpha/2} (-q^3 z)^{-\partial/2}.
	\label{eq: Psi-* boson}
	\end{equation}

	Denote
	\begin{align} \label{eqdef: alpha and beta}
	\ff(x) =& \frac{(q^4 x; q^4)_{\infty}}{(q^2 x; q^4)_{\infty}}, &
	\pp(x) =& \frac{(q^2 x; q^4)_{\infty}}{(x; q^4)_{\infty}},&
	\fp(x) =& \frac{(q x; q^4)_{\infty}}{(q^3 x; q^4)_{\infty}}.
	\end{align}
	It is straightforward to check that
	\begin{align}
	(-q^3 z)^{-1/2} \ff(w/z) \Phi_-(z) \Phi_-(w) =& \ :\!\Phi_-(z) \Phi_-(w)\!:,  \label{eq: Phi Phi norm ord}\\
	(-qz)^{-1/2} \pp(w/z) \Psi_+(z) \Psi_+(w) =& \ :\!\Psi_+(z) \Psi_+(w)\!:, \label{eq: Psi Psi norm ord}\\
	(-q^3 z)^{1/2} \fp(w/z) \Phi_-(z) \Psi_-^*(w) =& \ :\!\Phi_-(z) \Psi_-^*(w)\!:, \label{eq: Phi Psi norm ord}\\
	(-q^3 w)^{1/2} \fp(z/w) \Psi_-^*(w) \Phi_-(z) =& \ :\!\Phi_-(z) \Psi_-^*(w)\!:\!. \label{eq: Psi Phi norm ord}
	\end{align}
	here $: \ldots :$ stands for normal ordering in terms of Heisenberg algebra. Then 
	\begin{align}
	z^{-1/2} \ff(w/z) \Phi_-(z) \Phi_-(w) =& w^{-1/2} \ff(z/w) \Phi_-(w) \Phi_-(z), \label{eq: interchange phi- phi-} \\
	z^{-1/2} \pp(w/z) \Psi^*_-(z) \Psi^*_-(w) =& w^{-1/2} \pp(z/w) \Psi^*_-(w) \Psi^*_-(z),\label{eq: interchange psi- psi-} \\
	z^{1/2} \fp(w/z) \Phi_-(z) \Psi_-^*(w) =& w^{1/2} \fp(z/w) \Psi_-^*(w) \Phi_-(z). \label{eq: interchange psi- phi-}
	\end{align}
	\paragraph{$\pi$-involution} Recall, that $\qsl$ is also generated by $e_i$, $f_i$, $t_i$ for $i=0,1$. These generators are called \emph{Chevalley generators}. The connection with Drinfeld generators as follows
	\begin{align}
	t_1=&K, & x_0^+=&e_1, & x_0^-=&f_1, \\
	t_0 =& \gamma K^{-1}, & x_1^- =& e_0 t_1, & x_{-1}^+ =& t_1^{-1} f_0.
	\end{align}
	Let us consider an exterior automorphism $\pi$ of $\qsl$ given by $\pi(e_i)=e_{1-i}$,  $\pi(f_i)=f_{1-i}$. Then $\pi$ acts on the Drinfeld generators as follows
	\begin{align}
	&\pi(K)= \gamma K^{-1}, & &\pi(x_0^+) =  x_1^- K^{-1}, & &\pi(x_0^-) =  K  x_{-1}^+, \\
	&   &  &\pi(x_1^-)=  \gamma x_0^+ K^{-1},& &\pi(x_{-1}^+) = \gamma^{-1} K x_0^{-}.
	\end{align}
	\begin{proposition}
		
		There exist an involution $\tilde{\pi}$ interchanging $V(\Lambda_0)$ and $V(\Lambda_1)$, such that $\tilde{\pi} X \tilde{\pi} = \pi(X)$ for any $X \in \qsl$.
	\end{proposition}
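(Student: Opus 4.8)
The plan is to read the desired identity representation-theoretically. Since $\tilde\pi$ is to be an involution, the relation $\tilde\pi X\tilde\pi=\pi(X)$ is equivalent to $\tilde\pi X=\pi(X)\tilde\pi$, i.e.\ to the assertion that $\tilde\pi$ is an isomorphism of $\qsl$-modules from $V(\Lambda_0)\oplus V(\Lambda_1)$ onto its $\pi$-twist. I would therefore build $\tilde\pi$ from two block maps $\sigma_i\colon V(\Lambda_i)\to V(\Lambda_{1-i})$ satisfying $\sigma_i(Xv)=\pi(X)\sigma_i(v)$, and then normalize so that $\sigma_1\sigma_0$ and $\sigma_0\sigma_1$ become the identity.

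First I would record that $\pi$ is an involution of $\qsl$: on Chevalley generators $\pi^2(e_i)=e_i$ and $\pi^2(f_i)=f_i$, while from $\pi(K)=\gamma K^{-1}$ one reads off $\pi(t_1)=t_0$, $\pi(t_0)=t_1$ and $\pi(\gamma)=\gamma$. Thus $\pi$ interchanges the Chevalley raising operators and preserves the level. Next I would identify the $\pi$-twist $V(\Lambda_{1-i})^\pi$, namely the module with underlying space $V(\Lambda_{1-i})$ and action $X\cdot v:=\pi(X)v$. Its cyclic vector $|\Lambda_{1-i}\rangle$ is still annihilated by $e_0$ and $e_1$ (as $\pi$ only permutes them), hence is a highest weight vector; its weight follows from $t_j\cdot|\Lambda_{1-i}\rangle=t_{1-j}|\Lambda_{1-i}\rangle$, which swaps the two fundamental eigenvalues and so yields highest weight $\Lambda_i$. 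Since $\pi(\gamma)=\gamma$, the twist is again an integrable level-$1$ highest weight module, so by uniqueness of such modules $V(\Lambda_{1-i})^\pi\cong V(\Lambda_i)$.

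Such an isomorphism is exactly a map $\sigma_i\colon V(\Lambda_i)\xrightarrow{\sim}V(\Lambda_{1-i})^\pi$ as above, and by irreducibility of the basic modules together with Schur's lemma it is unique up to a scalar. I would then set $\tilde\pi:=\sigma_0\oplus\sigma_1$, which by construction interchanges $V(\Lambda_0)$ and $V(\Lambda_1)$ and satisfies $\tilde\pi X=\pi(X)\tilde\pi$. Because $\pi^2=\mathrm{id}$, each composite $\sigma_{1-i}\sigma_i$ commutes with the $\qsl$-action on the irreducible module $V(\Lambda_i)$, hence equals a scalar $c$ by Schur, and the identity $\sigma_0(\sigma_1\sigma_0)=(\sigma_0\sigma_1)\sigma_0$ shows this is the same scalar for $i=0,1$; rescaling $\sigma_0\mapsto c^{-1}\sigma_0$ then forces $\tilde\pi^2=\mathrm{id}$.

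The only real subtlety is this last normalization — ensuring that $\tilde\pi$ squares to the identity rather than to an unwanted scalar — which is precisely what Schur's lemma supplies once irreducibility is invoked. If instead one wants $\tilde\pi$ in closed form on the Fock realization, the natural ansatz is $a_n\mapsto -a_n$ on $\mathbb{C}[a_{-1},a_{-2},\dots]$ together with the lattice reflection $e^{\Lambda_i+n\alpha}\mapsto e^{\Lambda_{1-i}-n\alpha}$ corrected by suitable signs; there the work shifts to matching the signs and scalars so that conjugation of the currents $X^\pm(z)$ reproduces the action of $\pi$ on the Drinfeld generators, a direct but longer verification because $\pi$ mixes $x^+$, $x^-$ and powers of $K$.
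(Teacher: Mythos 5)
Your proposal is correct and takes essentially the same route as the paper: the paper's two-line proof likewise observes that $V(\Lambda_0)$, $V(\Lambda_1)$ are irreducible highest weight modules, that $\pi$ preserves the triangular decomposition (so the $\pi$-twist is again a highest weight module), and that $\pi$ interchanges the two highest weights, which is exactly your identification $V(\Lambda_{1-i})^{\pi}\cong V(\Lambda_i)$. The only point you elaborate beyond the paper is the normalization making $\tilde\pi^2=\mathrm{id}$ via Schur's lemma; the paper handles this equivalently by imposing $\tilde{\pi}\left(|\Lambda_i\rangle\right)=|\Lambda_{1-i}\rangle$ immediately after the proposition.
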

	\begin{proof}
		$V(\Lambda_0)$ and $V(\Lambda_1)$ are irreducible highest weights representations and $\pi$ preserves triangular decomposition. To finish the proof we notice that action of $\pi$ interchange the highest weights of the representations.
	\end{proof}
	To determine $\tilde{\pi}$ uniquely we require $\tilde{\pi} \left(|\Lambda_i \rangle \right) = |\Lambda_{1-i} \rangle$.
	\begin{proposition} \label{prop: pi involution}
		Conjugation by involution $\tilde{\pi}$ is expressed as follows
		\begin{align}
		\tilde{\pi} \left(\Phi_+^{(1-i,i)}(z) \right) \tilde{\pi} =& (-q^3)^{\frac12-i} z^{-\frac12} \Phi^{(i,1-i)}_-(z), & \tilde{\pi} \left(\Phi^{(1-i,i)}_-(z) \right) \tilde{\pi}=& (-q^3)^{\frac12-i} z^{\frac12} \Phi_+^{(i,1-i)}(z),\\		
		\tilde{\pi} \left(\Psi_+^{(1-i,i)}(z) \right) \tilde{\pi}=& (-q)^{3\left(\frac12-i \right)} z^{-\frac12} \Psi^{(i,1-i)}_-(z), & \tilde{\pi} \left(\Psi_-^{(1-i,i)}(z) \right) \tilde{\pi}=& (-q)^{3\left(\frac12-i \right)} z^{\frac12}\Psi^{(i,1-i)}_+(z).
		\end{align}
	\end{proposition}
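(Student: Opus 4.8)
The plan is to reduce each identity to a statement about scalar factors, using that the vertex operators are, up to normalization, the unique intertwiners of their type. Recall from \cite[Chapter 6]{JM} that $\Phi_\pm(z)$ and $\Psi_\pm(z)$ are the two components, with respect to the weight basis $v_+,v_-$ of the evaluation module $V_z$, of intertwiners $\Phi(z)\colon V(\Lambda_i)\to V(\Lambda_{1-i})\otimes V_z$ and $\Psi(z)\colon V(\Lambda_i)\to V_z\otimes V(\Lambda_{1-i})$, and that for fixed source and target the space of such intertwiners is one-dimensional. Since $\tilde\pi X\tilde\pi=\pi(X)$, conjugating the intertwining relation by $\tilde\pi$ turns $\Phi(z)$ into an intertwiner for the $\pi$-twisted action; because $\pi$ is the diagram flip, its action on $V_z$ interchanges $v_+\leftrightarrow v_-$, so the conjugated operator is again a genuine intertwiner but with its two components swapped. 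One-dimensionality then forces $\tilde\pi\Phi_{\pm}^{(1-i,i)}(z)\tilde\pi=c_{i}^{\pm}(z)\,\Phi^{(i,1-i)}_{\mp}(z)$ and likewise $\tilde\pi\Psi_{\pm}^{(1-i,i)}(z)\tilde\pi=\tilde c_{i}^{\pm}(z)\,\Psi^{(i,1-i)}_{\mp}(z)$ for scalar functions $c_i^\pm,\tilde c_i^\pm$; this already reproduces the shape of all four formulas, with $\Phi_+\leftrightarrow\Phi_-$ and $\Psi_+\leftrightarrow\Psi_-$.

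It remains to determine the scalars, which I would do by applying both sides to the highest-weight vector $|\Lambda_{1-i}\rangle$ and using the explicit bosonizations \eqref{eq: Phi- boson}--\eqref{eq: Psi-* boson} together with $\tilde\pi|\Lambda_{1-i}\rangle=|\Lambda_i\rangle$. The key input is the conjugation of the lattice factor: from $\pi(K)=\gamma K^{-1}$, $K=q^{\partial}$, $\gamma=q$ one gets $q^{\tilde\pi\partial\tilde\pi}=q^{1-\partial}$, hence $\tilde\pi\,\partial\,\tilde\pi=1-\partial$ and $\tilde\pi(-q^3z)^{\partial/2}\tilde\pi=(-q^3z)^{1/2}(-q^3z)^{-\partial/2}$. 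The explicit factor $(-q^3z)^{1/2}=(-q^3)^{1/2}z^{1/2}$ produced here is exactly the source of the $z^{\pm1/2}$ in the statement, and similarly the factor $(-qz)^{-\partial/2}$ in $\Psi_+(z)$ produces the base $-q$ carrying three times the analogous exponent. The $i$-dependence of the constants is controlled by the parity of the $\partial$-eigenvalue on $V(\Lambda_i)$: since $(\alpha,\Lambda_i)=i$, the factor $(-q^3z)^{\partial/2}$ takes integer powers of $-q^3z$ on $V(\Lambda_0)$ but half-integer powers on $V(\Lambda_1)$, which is what upgrades a plain $(-q^3)^{1/2}$ to $(-q^3)^{\frac12-i}$ (and $(-q)^{3/2}$ to $(-q)^{3(\frac12-i)}$).

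Only two of the four identities need to be established in this way; the other two then follow formally. Conjugating, for instance, the identity for $\Phi_+^{(1-i,i)}$ by $\tilde\pi$ and using $\tilde\pi^2=\mathrm{id}$ gives $\tilde\pi\Phi_-^{(i,1-i)}(z)\tilde\pi=(-q^3)^{i-\frac12}z^{1/2}\Phi_+^{(1-i,i)}(z)$, and relabeling $i\mapsto 1-i$ turns this into the stated identity for $\tilde\pi\Phi_-^{(1-i,i)}(z)\tilde\pi$; the same device relates the two $\Psi$-identities.

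The part I expect to be genuinely delicate is not the structural reduction but the normalization bookkeeping: getting every sign, every power of $q$, and the branch of the half-integer power $z^{\pm1/2}$ consistent with the convention fixing $(-q^3z)^{\partial/2}$, and checking that the contributions of the Heisenberg exponentials, of $e^{\pm\alpha/2}$, and of the lattice factor combine into precisely $(-q^3)^{\frac12-i}$ and $(-q)^{3(\frac12-i)}$. I should also note that a fully explicit alternative---conjugating the bosonic formulas term by term---would first require pinning down $\tilde\pi a_n\tilde\pi$, and this is less immediate than it looks: the listed action of $\pi$ concerns only finitely many Drinfeld generators and carries the extra $K^{\pm1}$ factors, so one cannot simply read off a clean rule $\tilde\pi a_n\tilde\pi=c_n a_n$ from a naive comparison of the currents $X^{\pm}(z)$. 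The representation-theoretic route above is attractive precisely because it bypasses this point.
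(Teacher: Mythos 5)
Your proposal is correct and takes essentially the same route as the paper's own sketch: reduce each identity to a scalar by the one-dimensionality of the intertwiner spaces (conjugation by $\tilde\pi$ swapping the two components of the evaluation module), then fix the scalars by evaluating on highest-weight vectors, which is precisely the paper's ``comparison with normalization \cite[(6.4), (6.5)]{JM}''. Your added details --- $\tilde\pi\,\partial\,\tilde\pi = 1-\partial$ as the source of $z^{\pm\frac12}$ and $(-q^3)^{\frac12-i}$, the $\tilde\pi^2=\mathrm{id}$, $i\mapsto 1-i$ device deducing two identities from the other two (which checks out against the stated constants), and the caveat that $\tilde\pi a_n\tilde\pi$ cannot be read off naively --- are consistent refinements of the same argument rather than a different one.
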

	\begin{proof}[Sketch of a proof]
		One can prove the formulas up to a constant via the intertwining properties \cite[Chapter 6]{JM}
		\begin{align}
		\tilde{\pi} \left(\Phi_+^{(1-i,i)}(z) \right) \tilde{\pi} =& c_{1}^{(i)}  \Phi^{(i,1-i)}_-(z), & \tilde{\pi} \left(\Phi^{(1-i,i)}_-(z) \right) \tilde{\pi}=& c_{1}^{(i)} z \Phi_+^{(i,1-i)}(z),\\	
		\tilde{\pi} \left(\Psi_+^{(1-i,i)}(z) \right) \tilde{\pi}=& c_2^{(i)} \Psi^{(i,1-i)}_-(z), & \tilde{\pi} \left(\Psi_-^{(1-i,i)}(z) \right) \tilde{\pi}=& c_2^{(i)} z \Psi^{(i,1-i)}_+(z),
		\end{align}
		here $c_1^{(i)}$ and $c_2^{(i)}$ are some $z$-dependent scalars.
		Then one can find the constants by comparison with normalization \cite[eq. (6.4), (6.5)]{JM}.
	\end{proof}
	\begin{corollary} \label{corollary: involution} The following relations hold
		\begin{align}
		z^{-1/2} \ff(w/z) \Phi_+(z) \Phi_+(w) =& w^{-1/2} \ff(z/w) \Phi_+(w) \Phi_+(z), \label{eq: interchange phi+ phi+}\\
		z^{-1/2} \pp(w/z) \Psi^*_+(z) \Psi^*_+(w) =& w^{-1/2} \pp(z/w) \Psi^*_+(w) \Psi^*_+(z), \label{eq: interchange psi+ psi+}\\
		z^{1/2} \fp(w/z) \Phi_+(z) \Psi_+^*(w) =& w^{1/2} \fp(z/w) \Psi_+^*(w) \Phi_+(z). \label{eq: interchange phi+ psi+}
		\end{align}
	\end{corollary}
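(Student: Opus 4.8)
The plan is to deduce all three relations from the already established ``minus'' versions \eqref{eq: interchange phi- phi-}, \eqref{eq: interchange psi- psi-} and \eqref{eq: interchange psi- phi-} by conjugating with the involution $\tilde{\pi}$ of Proposition \ref{prop: pi involution}. Since $\tilde{\pi}$ is an involution, $\tilde{\pi}^2 = \mathrm{id}$, so conjugation is multiplicative on products, $\tilde{\pi}(AB)\tilde{\pi} = (\tilde{\pi}A\tilde{\pi})(\tilde{\pi}B\tilde{\pi})$, and scalar coefficients such as $\ff(w/z)$ pass through unchanged. Hence conjugating any ``minus'' interchange relation term by term turns every $\Phi_-$ and $\Psi_-^*$ into $\Phi_+$ and $\Psi_+^*$, up to the explicit scalar and $z$-power prefactors recorded in Proposition \ref{prop: pi involution}.

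First I would restate the involution action on the dual operators. Using $\Psi_{\varepsilon}^*(z) = \Psi_{-\varepsilon}(q^2 z)$ together with Proposition \ref{prop: pi involution}, one gets, after the substitution $z \mapsto q^2 z$,
\begin{align*}
\tilde{\pi}\left(\Psi_-^{*(1-i,i)}(z)\right)\tilde{\pi} &= (-q)^{3(\frac12-i)}\, q^{-1}\, z^{-1/2}\, \Psi_+^{*(i,1-i)}(z), \\
\tilde{\pi}\left(\Psi_+^{*(1-i,i)}(z)\right)\tilde{\pi} &= (-q)^{3(\frac12-i)}\, q\, z^{1/2}\, \Psi_-^{*(i,1-i)}(z).
\end{align*}
The only new ingredient compared to the $\Phi$'s is the extra factor $q^{\pm 1}$ coming from evaluating the $z^{\pm 1/2}$ prefactor at $q^2 z$.

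Next I would conjugate each minus relation, keeping careful track of superscripts. For instance, writing the left-hand side of \eqref{eq: interchange phi- phi-} with explicit labels as $\Phi_-^{(i,1-i)}(z)\Phi_-^{(1-i,i)}(w)$, conjugation sends it to $z^{1/2}w^{1/2}\,\Phi_+^{(1-i,i)}(z)\Phi_+^{(i,1-i)}(w)$, where the two constants $(-q^3)^{i-\frac12}$ and $(-q^3)^{\frac12-i}$ cancel; doing the same to the right-hand side and setting $j = 1-i$ reproduces \eqref{eq: interchange phi+ phi+} for index $j$ after clearing the overall invertible factor $z^{1/2}w^{1/2}$. The identical computation applied to \eqref{eq: interchange psi- psi-} yields \eqref{eq: interchange psi+ psi+} up to the overall factor $q^{-2}z^{-1/2}w^{-1/2}$, and applied to \eqref{eq: interchange psi- phi-} yields \eqref{eq: interchange phi+ psi+} up to the overall factor $-q^{-1}z^{1/2}w^{-1/2}$. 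As $i$ runs over $\{0,1\}$, so does $j=1-i$, so all indices of the abbreviated statements are covered.

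The only delicate point is the bookkeeping of signs and powers of $q$ in the prefactors: one must check that in each case the product of the two scalars from Proposition \ref{prop: pi involution} (including the extra $q^{\pm 1}$ for the dual operators) is a constant independent of $i$, and that the accumulated $z,w$ powers combine with the explicit $z^{\pm 1/2}, w^{\pm 1/2}$ of the minus relations into a single invertible monomial that appears on both sides and can be divided out. The sign cancellations rely on $(-1)^{2i}=1$ and the $q$-cancellations on $q^{3(i-\frac12)}q^{3(\frac12-i)}=1$; once these are verified the corollary follows at once.
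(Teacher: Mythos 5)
Your proposal is correct and follows exactly the paper's own route: the paper proves this corollary in one line by conjugating the ``minus'' relations \eqref{eq: interchange phi- phi-}--\eqref{eq: interchange psi- phi-} by $\tilde{\pi}$, and you have simply made explicit the bookkeeping (the action of $\tilde{\pi}$ on $\Psi^*_\pm$ via $\Psi^*_\varepsilon(z)=\Psi_{-\varepsilon}(q^2 z)$, the cancellation of the $(-q^3)^{\pm(\frac12-i)}$-type constants, and the overall invertible monomials) that the authors leave implicit. All of your scalar computations check out, so nothing further is needed.
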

	\begin{proof}
		These relations are obtained from \eqref{eq: interchange phi- phi-}--\eqref{eq: interchange psi- phi-} after conjugation by $\tilde{\pi}$.
	\end{proof}
	\section{Vertex operators relations revisited} \label{Section: JM}
	The main results of this section are R-matrix relations (Theorems \ref{Th: R-matrix for Phi} and \ref{Th: R-matrix for Psi}). One can find these relations without delta-function term in \cite{JM}. In \emph{loc. cit.} parameters of vertex operator are numbers, but in our paper the parameters are formal variables. Although formulas below are close to formulas in \emph{loc. cit.}, they have a different meaning and can be considered as a new result.
	\subsection{R-matrix relations}
	
	R-matrix is an operator on $\mathbb{C}^2 \otimes \mathbb{C}^2$. Let $v_{+}$, $v_-$ be a basis of each $\mathbb{C}^2$. The matrix of this operator with respect to basis $v_+\otimes v_+$, $v_+\otimes v_-$, $v_-\otimes v_+$, $v_-\otimes v_-$ looks as follows
	\begin{equation} \label{eq: Rmatrix formulas}
	R(x)=  \begin{pmatrix}  
	1 & 0 & 0 & 0\\
	0 & \frac{(1-x)q}{1-q^2 x}& \frac{1-q^2}{1-q^2 x} & 0\\
	0 & \frac{(1-q^2)x}{1-q^2 x} & \frac{(1-x)q}{1-q^2 x} & 0\\
	0 & 0 & 0 & 1
	\end{pmatrix}.
	\end{equation}
	This R-matrix is an important object in the representation theory of $\qsl$, though in this paper we will not use any information on R-matrix apart from \eqref{eq: Rmatrix formulas}. Below we will see that R-matrix encodes certain interchanging relations for vertex operators.
	\subsubsection{Interchanging relation on $\Phi$-vertex operators}
	Denote by $\delta(x,y)= \sum_{k+l=-1} x^k y^l $.
	\begin{proposition} \label{Th: PhiPhi rel}
		Following relations hold
		\begin{multline}
		z^{-\frac12} \ff(w/z)  \Phi_-(z) \Phi_+ (w) - w^{-\frac12} \ff(z/w) \left( \frac{q(1-z/w)}{q^2-z/w}  \Phi_+ (w) \Phi_- (z) + \frac{(q^2-1)z/w}{q^2-z/w}  \Phi_- (w) \Phi_+ (z) \right) \\=(-1)^{\partial} (-q^3)^{\frac12} q^{-2}  \delta(z,q^2 w),
		\end{multline}
		\vspace{-0.6cm}
		\begin{multline} 
		z^{-\frac12} \ff(w/z)  \Phi_+(z) \Phi_- (w) - w^{-\frac12} \pp(z/w) \left( \frac{q(1-z/w)}{q^2-z/w}  \Phi_- (w) \Phi_+ (z) + \frac{q^2-1}{q^2-z/w}  \Phi_+ (w) \Phi_- (z) \right) \\= -(-1)^{\partial} (-q^3)^{\frac12} q^{-3}  \delta(z,q^2 w).
		\end{multline} 
	\end{proposition}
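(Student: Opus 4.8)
The plan is to obtain both identities from the interchange relation \eqref{eq: interchange phi- phi-} for $\Phi_-(z)\Phi_-(w)$ by turning one factor $\Phi_-$ into $\Phi_+$ using the commutator definitions \eqref{eq: Phi min x_0}. The mechanism is the $q$-Leibniz rule: for $[A,C]_{p}=AC-pCA$ one has $[AB,C]_{q^{2}}=A[B,C]_{q}+q[A,C]_{q}B$ and $[AB,C]_{q^{-2}}=A[B,C]_{q^{-1}}+q^{-1}[A,C]_{q^{-1}}B$. Applying $[\,\cdot\,,x_0^-]_{q^{2}}$ to both sides of \eqref{eq: interchange phi- phi-} and using $[\Phi_-(z),x_0^-]_{q}=\Phi_+(z)$ gives the exact formal-series identity
\[
z^{-\frac12}\ff(w/z)\bigl(\Phi_-(z)\Phi_+(w)+q\,\Phi_+(z)\Phi_-(w)\bigr)=w^{-\frac12}\ff(z/w)\bigl(\Phi_-(w)\Phi_+(z)+q\,\Phi_+(w)\Phi_-(z)\bigr),
\]
while applying $[\,\cdot\,,x_1^-]_{q^{-2}}$ and using $[\Phi_-(z),x_1^-]_{q^{-1}}=q^{2}z\,\Phi_+(z)$ gives a companion identity in which the four products are weighted by $q^{2}w,\,qz$ on the left and $q^{2}z,\,qw$ on the right. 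Neither carries a delta-function, since both are pure consequences of the algebra.

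Next I would solve this $2\times2$ linear system for the two products having $z$ to the left of $w$. Because the elimination uses only multiplication by $z,w$ and subtraction, it produces exact identities carrying an overall factor $(z-q^{2}w)$; isolating $\Phi_-(z)\Phi_+(w)$ yields
\[
z^{-\frac12}\ff(w/z)\,(z-q^{2}w)\,\Phi_-(z)\Phi_+(w)=w^{-\frac12}\ff(z/w)\bigl((1-q^{2})z\,\Phi_-(w)\Phi_+(z)+q(z-w)\,\Phi_+(w)\Phi_-(z)\bigr),
\]
and an analogous one isolates $\Phi_+(z)\Phi_-(w)$. Dividing by $(z-q^{2}w)$ turns the polynomial coefficients into exactly the rational factors $\tfrac{(q^{2}-1)z/w}{q^{2}-z/w}$ and $\tfrac{q(1-z/w)}{q^{2}-z/w}$ of the statement, reproducing the R-matrix block \eqref{eq: Rmatrix formulas}. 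The decisive point is that the left-hand product is a series in $w/z$ whereas the right-hand products are series in $z/w$, so $(z-q^{2}w)^{-1}$ must be expanded in opposite regions on the two sides; writing $\iota_{w,z}$, $\iota_{z,w}$ for expansion in $w/z$, $z/w$, the formal-calculus identity $\iota_{w,z}(z-q^{2}w)^{-1}-\iota_{z,w}(z-q^{2}w)^{-1}=\delta(z,q^{2}w)$ shows the mismatch is precisely the $\delta(z,q^{2}w)$ term. This is the promised appearance of the delta-function once the spectral parameters are treated as formal variables rather than numbers.

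It remains to pin down the coefficient of $\delta(z,q^{2}w)$ and the precise scalar normalisation. Since $(z-q^{2}w)\delta(z,q^{2}w)=0$, multiplying back by $(z-q^{2}w)$ only recovers the exact identities above and is blind to this coefficient; the coefficient is instead the residue $\res_{z=q^{2}w}$ of the right-hand side, i.e. the value of the reordered product at the fusion point $z=q^{2}w$. I would evaluate it with the special-point relation for two $\Phi$'s, whose only non-oscillator contribution comes from the zero-mode factor $(-q^{3}z)^{\partial/2}$ of $\Phi_-$; evaluated at $z=q^{2}w$ this produces the operator $(-1)^{\partial}$ together with the scalar $(-q^{3})^{\frac12}q^{-2}$ (respectively $-(-q^{3})^{\frac12}q^{-3}$ for the second identity). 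Finally I would match the overall scalar prefactors against the explicit normal-ordering relations \eqref{eq: Phi Phi norm ord}--\eqref{eq: Psi Phi norm ord}, using the functional identity $\ff(x)\pp(x)(1-x)=1$ to pass between the two normalisations that occur. I expect this last step — the residue/special-point evaluation fixing the delta-coefficient and the $(-1)^{\partial}$, rather than any of the forced algebra upstream — to be the main obstacle.
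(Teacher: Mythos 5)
Your first two steps coincide exactly with the paper's proof: the paper applies the same $q$-Leibniz rule to $\Phi_-(z)\Phi_-(w)$ with $x_0^-$ and $x_1^-$, solves the same $2\times 2$ system, and obtains the delta-function from the two opposite expansions of $(z-q^2w)^{-1}$ (your intermediate identity $(z-q^2w)\,z^{-\frac12}\ff(w/z)\Phi_-(z)\Phi_+(w)=w^{-\frac12}\ff(z/w)\bigl((1-q^2)z\,\Phi_-(w)\Phi_+(z)+q(z-w)\Phi_+(w)\Phi_-(z)\bigr)$ is precisely what underlies \eqref{eq: R-matrix Fi proof1}; incidentally, your forced derivation also shows that the $\pp(z/w)$ in the second relation of the statement should read $\ff(z/w)$, as in \eqref{eq: R-matrix Fi proof2}). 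Your scalar bookkeeping is consistent: granting the fusion-point value, the coefficient $(-1)^{\partial}(-q^3)^{\frac12}q^{-2}$ does come out as you state.

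The gap is exactly where you predicted it, and it is twofold. First, the object you propose to evaluate --- ``the reordered product at the fusion point $z=q^2w$'' --- is a priori ill-defined: each mode of $\ff(z/w)\Phi_\pm(w)\Phi_\mp(z)$ at $z=q^2w$ is an infinite sum of operators (the paper stresses this in the Remark after Proposition \ref{prop: spec point phi}). The quantity that can legitimately be specialized is the common numerator of \eqref{eq: reg exp for PhiM PhiP}--\eqref{eq: reg exp for PhiP PhiM}, namely $z[\,:\!\Phi_-(z)\Phi_-(w)\!:\,,x_0^-]_{q^2}-[\,:\!\Phi_-(z)\Phi_-(w)\!:\,,x_1^-]_{q^{-2}}$, which is well defined because the normal-ordered kernel is. Second, your method for computing the value --- invoking ``the special-point relation for two $\Phi$'s'' with the heuristic that only the zero mode $(-q^3z)^{\partial/2}$ contributes --- is circular relative to the paper and is not a computation: Proposition \ref{prop: spec point phi} is itself deduced from the same Lemma \ref{lemma: special points} that closes the present proof, so it cannot serve as input here. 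The actual mechanism is invisible at the level of a two-$\Phi$ product: one must write $x_0^-$, $x_1^-$ as contour integrals of the current $X^-(y)$ around $\Omega(w)=\,:\!\Phi_-(q^2w)\Phi_-(w)\!:$, observe that in the specific combination $q^2w[\Omega(w),x_0^-]_{q^2}-[\Omega(w),x_1^-]_{q^{-2}}$ the polynomial prefactors cancel one pole of each OPE, leaving a single simple pole at $y=q^4w$, and then use the identity $:\!X^-(q^4w)\Omega(w)\!:\,=(-1)^{\partial}$, where the oscillator exponentials cancel identically at this triple of points --- this cancellation is the true content behind your zero-mode heuristic, and it must be checked. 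Without this residue computation (the paper's Lemma \ref{lemma: special points}) the coefficient of $\delta(z,q^2w)$, and in particular the operator $(-1)^{\partial}$, remains unproven.
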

	\begin{proof}
		Using \eqref{eq: Phi min x_0}, we obtain
		
		\begin{align}
		[\Phi_- (z) \Phi_- (w), x_0^-]_{q^2} = \Phi_- (z) [\Phi_- (w), x_0^- ]_q + q [\Phi_-(z),& x_0^-]_q \Phi_- (w) \notag \\&=\Phi_- (z) \Phi_+(w) + q \Phi_+ (z) \Phi_- (w),
		\\
		[ \Phi_- (z) \Phi_- (w), x_1^- ]_{q^{-2}} =  \Phi_- (z) [\Phi_- (w), x_1^- ]_{q^{-1}} + q^{-1} [&\Phi_- (z), x_1^- ]_{q^{-1}} \Phi_- (w)\notag
		\\&=q^2 w \Phi_- (z) \Phi_+(w) + q z \Phi_+ (z) \Phi_- (w).
		\end{align}
		Solving the system of two linear equations, one can find
		\begin{align}
		\Phi_-(z) \Phi_+ (w) &= \frac{ z[ \Phi_- (z) \Phi_- (w), x_0^-]_{q^2} - [ \Phi_- (z) \Phi_- (w), x_1^- ]_{q^{-2}}}{z(1-q^2 w/z)},\\
		\Phi_+ (z) \Phi_- (w) &= \frac{-qw[ \Phi_- (z) \Phi_- (w), x_0^-]_{q^2} + q^{-1} [ \Phi_- (z) \Phi_- (w), x_1^- ]_{q^{-2}}}{z(1-q^2 w/z)}.
		\end{align}
		Using \eqref{eq: Phi Phi norm ord}, we see that			
		\begin{align}
		(-q^3 z)^{-\frac12} \ff(w/z) \Phi_-(z) \Phi_+ (w) &= \frac{ z[ : \! \Phi_-(z) \Phi_-(w) \! : \ , x_0^-]_{q^2} - [: \! \Phi_-(z) \Phi_-(w) \! : \ ,  x_1^- ]_{q^{-2}}}{z(1-q^2 w/z)}, \label{eq: reg exp for PhiM PhiP} \\
		(-q^3 z)^{-\frac12} \ff(w/z) \Phi_+ (z) \Phi_- (w)  &= \frac{-qw[ : \! \Phi_-(z) \Phi_-(w) \! : \ , x_0^-]_{q^2} + q^{-1} [ : \! \Phi_-(z) \Phi_-(w) \! : \ , x_1^- ]_{q^{-2}}}{z(1-q^2 w/z)}.  \label{eq: reg exp for PhiP PhiM} 
		\end{align}
		
		Then
		\begin{multline} \label{eq: R-matrix Fi proof1}
		z^{-\frac12} \ff(w/z) \Phi_-(z) \Phi_+ (w) - w^{-1/2} \ff(z/w) \left( \frac{q(1-z/w)}{q^2-z/w}  \Phi_+ (w) \Phi_- (z) + \frac{(q^2-1)z/w}{q^2-z/w} \Phi_- (w) \Phi_+ (z) \right) \\
		=(-q^3)^{\frac12} \Big(  q^2 w[ : \! \Phi_- (q^2 w) \Phi_- (w) \! :, x_0^-]_{q^2} - [ : \! \Phi_- (q^2 w) \Phi_- (w) \! :, x_1^- ]_{q^{-2}} \Big)   \delta(z,q^2 w),
		\end{multline}
		\begin{multline} \label{eq: R-matrix Fi proof2}
		z^{-\frac12} \ff(w/z) \Phi_+(z) \Phi_- (w) - w^{-\frac12} \ff(z/w) \left( \frac{q(1-z/w)}{q^2-z/w}  \Phi_- (w) \Phi_+ (z) + \frac{q^2-1}{q^2-z/w}  \Phi_+ (w) \Phi_- (z) \right) \\
		=(-q^3 )^{\frac12} \Big(  -q w[ : \! \Phi_- (q^2 w) \Phi_- (w) \! :, x_0^-]_{q^2} + q^{-1} [ : \! \Phi_- (q^2 w) \Phi_- (w) \! :, x_1^- ]_{q^{-2}} \Big)   \delta(z,q^2 w).
		\end{multline}

		\begin{lemma} \label{lemma: special points}
			Following relation holds
			\begin{equation}
			q^2 w[ : \! \Phi_- (q^2 w) \Phi_- (w) \! :, x_0^-]_{q^2} - [ : \! \Phi_- (q^2 w) \Phi_- (w) \! :, x_1^- ]_{q^{-2}} = (-1)^{\partial} q^{-2}.
			\end{equation}
		\end{lemma}
		
		\begin{proof}
			We will proof the lemma assuming $w$ to be a number, not a formal variable; formal variable version follows. Let us consider two contours of integration $C_+ = \{y \mid |y|=R_+ \gg |w|  \}$, $C_- = \{y \mid |y|=R_- \ll |w|  \}$
			
			Denote $\Omega(w) = \ :\!\Phi_- (q^2 w) \Phi_- (w)\!:$. Note that
			\begin{align}
			[ \Omega(w) , x_0^-]_{q^2} =& \int_{C_-} \Omega(w) X^-(y) dy - q^2 \int_{C_+} X^-(y) \Omega(w)dy,\\
			[ \Omega(w) , x_{1}^-]_{q^{-2}} =& \int_{C_-} y \Omega(w) X^-(y) dy - q^{-2}\int_{C_+} y X^-(y) \Omega(w)dy.
			\end{align}
			Hence
			\begin{multline*}
			q^2 w[ \Omega(w), x_0^-]_{q^2} - [ \Omega(w), x_1^- ]_{q^{-2}} =
			\int_{C_-}  \big( q^2 w -  y \big) \Omega(w) X^-(y) dy -\int_{C_+}  \big( q^4 w - q^{-2} y \big) X^-(y) \Omega(w)dy\\
			=\int_{C_-}  \frac{\big( q^2 w -  y \big)}{q^2 (q^4 w-y)(q^2 w - y)} :\!\Omega(w) X^-(y)\!:dy -\int_{C_+}  \frac{ q^4 w - q^{-2} y }{(y -q^4 w)(y-q^6 w)} :\!X^-(y) \Omega(w)\!:dy\\
			=\int_{C_+}  \frac{ q^{-2}  }{y -q^4 w} :\!X^-(y) \Omega(w)\!:dy - \int_{C_-}  \frac{q^{-2}}{y-q^4 w} :\!\Omega(w) X^-(y)\!:dy   
			\\ =\res_{y=q^4 w} \frac{ q^{-2}  }{y -q^4 w} : \! X^-(y) \Omega(w)\!:dy.
			= (-1)^{\partial} q^{-2}.
			\end{multline*}
			Here we used  $:\!X^-(q^4 w) \Omega(w)\!: =(-1)^{\partial}$.
		\end{proof}
		To finish the proof of Proposition \ref{Th: PhiPhi rel} we apply Lemma \ref{lemma: special points} to \eqref{eq: R-matrix Fi proof1} and \eqref{eq: R-matrix Fi proof2}.
	\end{proof}
	
	\paragraph{Matrix notation} Denote $\Phi(z)=\Phi_+ (z)\otimes v_+ + \Phi_-(z) \otimes v_- \in \Hom(V(\Lambda_i), V(\Lambda_{1-i}) ) \otimes \mathbb{C}^2$. Denote products 
	\[\Phi^{(1)}(z) \Phi^{(2)} (w) = \sum_{\epsilon_1, \epsilon_2 = \pm} \Phi_{\epsilon_1}(z) \Phi_{\epsilon_2}(w) \otimes v_{\epsilon_1} \otimes v_{\epsilon_2},\;\;\; \Phi^{(2)}(w) \Phi^{(1)} (z) = \sum_{\epsilon_1, \epsilon_2 = \pm} \Phi_{\epsilon_2}(w) \Phi_{\epsilon_1}(z) \otimes v_{\epsilon_1} \otimes v_{\epsilon_2}.\] Finally denote by $R^{-1}(z/w) \Phi^{(2)} (w) \Phi^{(1)}(z)$ the result of the action of $R^{-1}(z/w)$ on $\mathbb{C}^2 \otimes \mathbb{C}^2$ tensor multiple of  $\Hom(V(\Lambda_i), V(\Lambda_{i}) ) \otimes \mathbb{C}^2\otimes \mathbb{C}^2$.
	
	\begin{theorem} \label{Th: R-matrix for Phi}
		The following relation holds
		\begin{multline}
		z^{-\frac12} \ff(w/z)  \Phi^{(1)}(z) \Phi^{(2)} (w) = \\ w^{-\frac12} \ff(z/w)  R^{-1}(z/w) \Phi^{(2)} (w) \Phi^{(1)}(z) + (-1)^{\partial}  (-q)^{\frac12} (q^{-1} v_- \otimes v_+ - q^{-2} v_+ \otimes v_-) \delta(z,q^2 w). 
		\end{multline}
	\end{theorem}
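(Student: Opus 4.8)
The plan is to unpack the claimed matrix identity into its four scalar components in the basis $\{v_{\epsilon_1}\otimes v_{\epsilon_2}\}$ and to recognize each component as one of the interchanging relations already established. The first step is to compute $R^{-1}(x)$ explicitly from \eqref{eq: Rmatrix formulas}. Since $R(x)$ is block diagonal and its middle $2\times 2$ block $\begin{pmatrix} a & b \\ c & a\end{pmatrix}$ has determinant $a^2-bc=\frac{(1-x)^2q^2-(1-q^2)^2x}{(1-q^2x)^2}=\frac{q^2-x}{1-q^2x}$, inverting that block gives
\[
R^{-1}(x)=\begin{pmatrix}1&0&0&0\\ 0&\tfrac{(1-x)q}{q^2-x}&-\tfrac{1-q^2}{q^2-x}&0\\ 0&-\tfrac{(1-q^2)x}{q^2-x}&\tfrac{(1-x)q}{q^2-x}&0\\ 0&0&0&1\end{pmatrix}
\]
in the ordered basis $v_+\otimes v_+,\,v_+\otimes v_-,\,v_-\otimes v_+,\,v_-\otimes v_-$.

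Next I would read off, for each $\epsilon_1,\epsilon_2$, the coefficient of $v_{\epsilon_1}\otimes v_{\epsilon_2}$ on both sides with $x=z/w$. On the left this coefficient is simply $z^{-1/2}\ff(w/z)\,\Phi_{\epsilon_1}(z)\Phi_{\epsilon_2}(w)$. On the right, since $\Phi^{(2)}(w)\Phi^{(1)}(z)=\sum_{\epsilon_1,\epsilon_2}\Phi_{\epsilon_2}(w)\Phi_{\epsilon_1}(z)\otimes v_{\epsilon_1}\otimes v_{\epsilon_2}$, applying $R^{-1}(z/w)$ mixes only the pair $(v_+\otimes v_-,\,v_-\otimes v_+)$ and leaves the two diagonal entries untouched; the $\delta$-term likewise contributes only to these two off-diagonal slots. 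The two diagonal components, $v_+\otimes v_+$ and $v_-\otimes v_-$, then reduce to the interchanging relations \eqref{eq: interchange phi+ phi+} and \eqref{eq: interchange phi- phi-}, with no $\delta$-term, consistently with the trivial corresponding rows of $R^{-1}$.

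The two off-diagonal components are precisely the two relations of Proposition \ref{Th: PhiPhi rel}. Using $-\tfrac{1-q^2}{q^2-x}=\tfrac{q^2-1}{q^2-x}$ and $-\tfrac{(1-q^2)x}{q^2-x}=\tfrac{(q^2-1)x}{q^2-x}$ to rewrite the mixing coefficients, the $v_-\otimes v_+$ slot reproduces the first relation of that proposition and the $v_+\otimes v_-$ slot the second, with the operator parts coinciding term by term. It then only remains to match the scalar prefactor of $\delta(z,q^2w)$. Assembling the four matched components back into $\mathbb{C}^2\otimes\mathbb{C}^2$ yields the stated identity.

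The one genuinely delicate point is this prefactor bookkeeping, since the proposition records the coefficients as $\pm(-1)^{\partial}(-q^3)^{1/2}q^{\mp k}$ while the theorem packages them as the vector $(-1)^{\partial}(-q)^{1/2}(q^{-1},-q^{-2})$. I must check that the half-integer powers of $q$ and the branch of the square root line up: concretely $(-q^3)^{1/2}q^{-2}=(-1)^{1/2}q^{-1/2}=(-q)^{1/2}q^{-1}$ for the $v_-\otimes v_+$ slot, and $(-q^3)^{1/2}q^{-3}=(-1)^{1/2}q^{-3/2}=(-q)^{1/2}q^{-2}$ for the $v_+\otimes v_-$ slot, so the two normalizations agree. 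With $R^{-1}$ computed and the signs verified, the rest is purely the repackaging of four scalar relations into one $\mathbb{C}^2\otimes\mathbb{C}^2$ equation.
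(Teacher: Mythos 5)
Your proposal is correct and takes essentially the same route as the paper: the paper's proof is literally the one-line remark that the theorem is a reformulation of \eqref{eq: interchange phi- phi-}, \eqref{eq: interchange phi+ phi+} and Proposition \ref{Th: PhiPhi rel}, and your componentwise unpacking --- explicit inversion of the middle block of $R(x)$ with determinant $\frac{q^2-x}{1-q^2x}$, identification of the two diagonal slots with the interchange relations and the two off-diagonal slots with the proposition, and the prefactor checks $(-q^3)^{\frac12}q^{-2}=(-q)^{\frac12}q^{-1}$ and $(-q^3)^{\frac12}q^{-3}=(-q)^{\frac12}q^{-2}$ --- supplies exactly the bookkeeping the paper leaves implicit. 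One cosmetic point: in the $v_+\otimes v_-$ slot you correctly use $\ff(z/w)$, which is what appears in the paper's own proof (cf.\ \eqref{eq: R-matrix Fi proof2}), whereas the printed statement of Proposition \ref{Th: PhiPhi rel} has $\pp(z/w)$ there, evidently a typo.
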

	\begin{proof}
		The theorem is just a reformulation of \eqref{eq: interchange phi- phi-}, \eqref{eq: interchange phi+ phi+} and Proposition \ref{Th: PhiPhi rel}.
	\end{proof}
	\subsubsection{Interchanging relation for $\Psi$-vertex operators}
	\begin{proposition} \label{Prop: R-matrix rel for Psi}
		The following relations hold
		\begin{multline}
		z^{-1/2} \pp(w/z)  \Psi_+(z) \Psi_-(w) - 
		w^{-1/2} \pp(z/w)  \left( \frac{q(1-z/w)}{1 - q^2 z/w} \Psi_-(w) \Psi_+(z) + \frac{1-q^2}{1-q^2 z/w} \Psi_+(w) \Psi_-(z) \right)\\=(-1)^{\partial} (-q)^{\frac12} q^2 \delta(q^2z, w),
		\end{multline}
		\begin{multline}
		z^{-1/2}  \pp(w/z) \Psi_-(z) \Psi_+(w) - w^{-1/2} \pp(z/w) \left( \frac{q(1-\frac{z}{w})}{1-q^2 \frac{z}{w}} \Psi_+(w) \Psi_-(z) +\frac{(1-q^2)\frac{z}{w}}{1-q^2 \frac{z}{w}} \Psi_-(w) \Psi_+(z) \right)\\= -(-1)^{\partial}  (-q)^{\frac12} q \delta(q^2z, w).
		\end{multline}
	\end{proposition}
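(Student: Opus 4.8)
The plan is to follow the proof of Proposition~\ref{Th: PhiPhi rel} almost verbatim, with the dictionary $(\Phi_-, x_0^-, x_1^-, \ff) \rightsquigarrow (\Psi_+, x_0^+, x_{-1}^+, \pp)$ and the normalization \eqref{eq: Phi Phi norm ord} replaced by \eqref{eq: Psi Psi norm ord}. First I would rewrite \eqref{eq: Psi min def} as $[\Psi_+(z), x_0^+]_q = \Psi_-(z)$ and $[\Psi_+(z), x_{-1}^+]_{q^{-1}} = (q^2z)^{-1}\Psi_-(z)$, and then apply the Leibniz identities $[AB,C]_{q^2} = A[B,C]_q + q[A,C]_q B$ and $[AB,C]_{q^{-2}} = A[B,C]_{q^{-1}} + q^{-1}[A,C]_{q^{-1}}B$ to the product $\Psi_+(z)\Psi_+(w)$. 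This gives
\begin{align*}
[\Psi_+(z)\Psi_+(w), x_0^+]_{q^2} &= \Psi_+(z)\Psi_-(w) + q\,\Psi_-(z)\Psi_+(w), \\
[\Psi_+(z)\Psi_+(w), x_{-1}^+]_{q^{-2}} &= (q^2w)^{-1}\Psi_+(z)\Psi_-(w) + q^{-3}z^{-1}\,\Psi_-(z)\Psi_+(w),
\end{align*}
a linear system in $\Psi_+(z)\Psi_-(w)$ and $\Psi_-(z)\Psi_+(w)$ whose determinant equals $q^{-3}z^{-1}(1-q^2z/w)$.

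Solving this system expresses both mixed products as combinations of the two twisted commutators over the common factor $1-q^2z/w$; explicitly the numerator of $\Psi_+(z)\Psi_-(w)$ is $[\Psi_+(z)\Psi_+(w),x_0^+]_{q^2} - q^4z\,[\Psi_+(z)\Psi_+(w),x_{-1}^+]_{q^{-2}}$, and that of $\Psi_-(z)\Psi_+(w)$ becomes $-q^{-1}$ times the same combination once $w$ is set to $q^2z$. Multiplying through by $(-qz)^{-1/2}\pp(w/z)$ and invoking \eqref{eq: Psi Psi norm ord}—the scalar prefactor commutes with $x_0^+$ and $x_{-1}^+$, so it may be pushed inside the commutators to replace $\Psi_+(z)\Psi_+(w)$ by $:\!\Psi_+(z)\Psi_+(w)\!:$—yields manifestly regular numerators, exactly as in \eqref{eq: reg exp for PhiM PhiP}--\eqref{eq: reg exp for PhiP PhiM}. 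The only source of non-regularity is the simple pole of $1/(1-q^2z/w)$ at $w=q^2z$.

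After normal ordering, every operator product on the left-hand side of each asserted identity reduces to the single normal-ordered product $:\!\Psi_+(z)\Psi_+(q^2z)\!:$ at the pole, times a rational function; the $R$-matrix coefficients multiplying the $\Psi_-(w)\Psi_+(z)$ and $\Psi_+(w)\Psi_-(z)$ terms are precisely chosen so that the two rational functions are the expansions of one and the same function in the regions $|z|>|w|$ (via $\pp(w/z)$) and $|z|<|w|$ (via $\pp(z/w)$). By the standard identity relating the two expansions of $(1-q^2z/w)^{-1}$, their difference collapses to $\delta(q^2z,w)$ times the residue, in complete parallel with \eqref{eq: R-matrix Fi proof1}--\eqref{eq: R-matrix Fi proof2}.

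It then remains to evaluate the residue, i.e. to prove the analogue of Lemma~\ref{lemma: special points}:
\[
[:\!\Psi_+(z)\Psi_+(q^2z)\!:, x_0^+]_{q^2} - q^4z\,[:\!\Psi_+(z)\Psi_+(q^2z)\!:, x_{-1}^+]_{q^{-2}} = (-1)^{\partial}\,q^2 .
\]
Since the numerator for $\Psi_-(z)\Psi_+(w)$ is $-q^{-1}$ times this combination at $w=q^2z$, and since $z^{-1/2}=(-q)^{1/2}(-qz)^{-1/2}$, this single computation produces both scalars $(-1)^{\partial}(-q)^{1/2}q^2$ and $-(-1)^{\partial}(-q)^{1/2}q$. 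I would prove the displayed identity by the same contour argument as in Lemma~\ref{lemma: special points}: write $x_0^+$ and $x_{-1}^+$ as contour integrals of $X^+(y)$, split each into a large and a small contour, normal-order the integrands with the bosonic formulas, and observe that all poles cancel except one simple pole, whose residue is $(-1)^{\partial}$ times a $q$-power because the contraction $:\!X^+(y_0)\Psi_+(z)\Psi_+(q^2z)\!:$ is a pure scalar there. I expect the main obstacle to be purely computational bookkeeping—tracking the half-integer powers $z^{\pm1/2}$, pinning down the $q$-exponents in the operator-product coefficients of $X^+$ against $\Psi_+$, and locating the surviving pole—so that the residue comes out to exactly $(-1)^{\partial}q^2$; the conceptual content is already fixed by the $\Phi$ computation.
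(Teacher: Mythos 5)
Your architecture is exactly the paper's: the same Leibniz identities applied to $\Psi_+(z)\Psi_+(w)$, the same $2\times2$ linear system (your determinant $q^{-3}z^{-1}(1-q^2z/w)$, both numerators, and the factor $-q^{-1}$ relating them at $w=q^2z$ all agree with \eqref{eq: reg exp PsiP PsiM}--\eqref{eq: reg exp PsiM PsiP}), the same normal-ordering step via \eqref{eq: Psi Psi norm ord}, and the same expansion-difference analysis producing the delta term. The one genuine error is the value you predict for the special-point identity. You claim
\begin{equation*}
[:\!\Psi_+(z)\Psi_+(q^2z)\!:, x_0^+]_{q^2} - q^4z\,[:\!\Psi_+(z)\Psi_+(q^2z)\!:, x_{-1}^+]_{q^{-2}} = (-1)^{\partial}q^2,
\end{equation*}
but the correct value (the paper's Lemma \ref{lemma: special point for Psi}) is $-(-1)^{\partial}z^{-1}$. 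A constant is impossible here on homogeneity grounds, and your own pipeline detects the mistake: with your denominator convention, the difference of the two expansions of $(1-q^2z/w)^{-1}$ is $-\delta(q^2z/w)=-w\,\delta(q^2z,w)$, so the delta coefficient in the first relation is $-(-q)^{1/2}q^2z$ times your residue. Plugging in $(-1)^{\partial}q^2$ yields $-(-1)^{\partial}(-q)^{1/2}q^4\,z\,\delta(q^2z,w)$, which is $z$-dependent and has the wrong sign and $q$-power, contradicting the asserted constant $(-1)^{\partial}(-q)^{1/2}q^2$. For the proposition to hold, the residue must scale as $z^{-1}$.

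Fortunately the error is localized and self-correcting if you actually run the contour argument you sketch: splitting $x_0^+$ and $x_{-1}^+$ into integrals of $X^+(y)$ over large and small contours and normal ordering, all poles cancel except $y=q^2z$, where the integrand is $\frac{q^2}{y(q^2z-y)}\,:\!X^+(y)\Upsilon(z)\!:$ with $:\!X^+(q^2z)\Upsilon(z)\!:\,=(-1)^{\partial}$; the rational prefactor supplies precisely the $z^{-1}$, and the residue is $-(-1)^{\partial}z^{-1}$. Feeding this corrected value back into your own bookkeeping --- including the $-q^{-1}$ ratio for the second relation and $z^{-1/2}=(-q)^{1/2}(-qz)^{-1/2}$ --- gives exactly the stated constants $(-1)^{\partial}(-q)^{1/2}q^2$ and $-(-1)^{\partial}(-q)^{1/2}q$. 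So the proof coincides with the paper's in structure and in every intermediate computation you made explicit, but the final evaluation as written is wrong and must be replaced by $-(-1)^{\partial}z^{-1}$.
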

	\begin{proof}
		Using \eqref{eq: Psi min def} we obtain
		\begin{equation*}
		[\Psi_+(z) \Psi_+(w), x_0^+]_{q^2} = \Psi_+(z) [\Psi_+(w), x_0^+]_{q} + q[\Psi_+(z), x_0^+]_q \Psi_+(w) = \Psi_+(z) \Psi_-(w) + q \Psi_-(z) \Psi_+(w),
		\end{equation*}
		\vspace{-2.5em}
		\begin{multline*}
		[\Psi_+(z) \Psi_+(w), x_{-1}^+]_{q^{-2}} = \Psi_+(z) [\Psi_+(w), x_{-1}^+]_{q^{-1}} + q^{-1}[\Psi_+(z), x_{-1}^+]_{q^{-1}} \Psi_+(w) \\ =(q^2 w)^{-1} \Psi_+(z) \Psi_-(w)+ (q^3 z)^{-1} \Psi_-(z) \Psi_+(w).
		\end{multline*}
		Solving the system of linear equations, we obtain
		\begin{align}
		\Psi_+(z) \Psi_-(w) = \frac{w/z \left(- [\Psi_+(z) \Psi_+(w), x_0^+]_{q^2} + q^4 z [:\!\Psi_+(z) \Psi_+(w)\!:, x_{-1}^+]_{q^{-2}} \right)}{q^2 -w/z},\\
		\Psi_-(z) \Psi_+(w) = \frac{q[\Psi_+(z) \Psi_+(w), x_0^+]_{q^2}  -  q^3 w [:\!\Psi_+(z) \Psi_+(w), x_{-1}^+]_{q^{-2}} }{q^2 - w/z}.
		\end{align}
		Using \eqref{eq: Psi Psi norm ord}, we see that
		\begin{align}
		(-qz)^{-1/2} \pp(w/z) \Psi_+(z) \Psi_-(w) =& \frac{ w \left(- [:\!\Psi_+(z) \Psi_+(w)\!:, x_0^+]_{q^2} + q^4 z [:\!\Psi_+(z) \Psi_+(w)\!:, x_{-1}^+]_{q^{-2}} \right)}{q^2 z\left(1 -  \frac{w}{q^2 z} \right)}, \label{eq: reg exp PsiP PsiM} \\
		(-qz)^{-1/2} \pp(w/z) \Psi_-(z) \Psi_+(w) =& \frac{q[:\!\Psi_+(z) \Psi_+(w)\!:, x_0^+]_{q^2}  -  q^3 w [:\!\Psi_+(z) \Psi_+(w)\!:, x_{-1}^+]_{q^{-2}} }{q^2 \left(1 - \frac{w}{q^2 z} \right)}. \label{eq: reg exp PsiM PsiP} 
		\end{align}
		Then
		\begin{multline}
		z^{-\frac12} \pp(w/z)  \Psi_+(z) \Psi_-(w) - w^{-\frac12} \pp(z/w) \left( \frac{1-q^2}{1-q^2 z/w} \Psi_+(w) \Psi_-(z) +\frac{q(1-z/w)}{1 - q^2 z/w} \Psi_-(w) \Psi_+(z) \right) \\ = (-q)^{\frac12} q^2 z \Big(\!\!-[:\!\Psi_+(z) \Psi_+(q^2 z)\!:, x_0^+]_{q^2} + q^4 z [:\!\Psi_+(z) \Psi_+(q^2 z)\!:, x_{-1}^+]_{q^{-2}} \Big) \ \delta(q^2 z, w).
		\end{multline}
		\begin{multline}
		z^{-\frac12} \pp(w/z) \Psi_-(z) \Psi_+(w) -  w^{-\frac12} \pp(z/w) \left( \frac{q(1-z/w)}{1-q^2 z/w} \Psi_+(w) \Psi_-(z) +\frac{(1-q^2)z/w}{1-q^2 z/w} \Psi_-(w) \Psi_+(z) \right) \\ =(-q)^{\frac12} q z\Big( [:\!\Psi_+(z) \Psi_+(q^2 z)\!:, x_0^+]_{q^2}  -  q^4 z [:\!\Psi_+(z) \Psi_+(q^2 z)\!:, x_{-1}^+]_{q^{-2}} \Big) \delta(q^2 z, w).
		\end{multline}
		\begin{lemma} \label{lemma: special point for Psi}
			Following relation holds
			\begin{align}
			[:\!\Psi_+(z) \Psi_+(q^2 z)\!:, x_0^+]_{q^2}  -  q^4 z [:\!\Psi_+(z) \Psi_+(q^2 z)\!:, x_{-1}^+]_{q^{-2}} = - (-1)^{\partial} z^{-1}.
			\end{align}
		\end{lemma}
		\begin{proof}
			Denote by
			\begin{equation}
			\Upsilon(z) =\ :\!\Psi_+(z) \Psi_+(q^2 z)\!:.
			\end{equation} 
			Note that
			\begin{align}
			[\Upsilon(z), x_0^+]_{q^2} &= \int_{C_-} \Upsilon(z) X^+(y)dy- q^2\int_{C_+}  X^+(y)\Upsilon(z)dy,\\
			[\Upsilon(z), x_{-1}^+]_{q^{-2}} &= \int_{C_-} y^{-1} \Upsilon(z) X^+(y)dy - q^{-2}\int_{C_+} y^{-1} X^+(y)\Upsilon(z)dy.
			\end{align}
			Hence
			\begin{gather*}
			[\Upsilon(z), x_0^+]_{q^2}  -  q^4 z [\Upsilon(z), x_{-1}^+]_{q^{-2}} = 
			\int_{C_-} (1-q^4 z/y)\Upsilon(z) X^+(y) dy - q^2 \int_{C_+}  (1 - z/y)X^+(y)\Upsilon(z)dy\\=
			\int_{C_-} \frac{q^2(1-q^4 z/y)}{(q^2 z-y)(q^4z-y)} :\!\Upsilon(z) X^+(y)\!:dy - q^2 \int_{C_+}  \frac{(1 - z/y)}{(y-z)(y-q^2 z)} :\!X^+(y)\Upsilon(z)\!:dy \\=-\int_{C_-} \frac{q^2}{y(q^2 z-y)} :\!\Upsilon(z) X^+(y)\!:dy + \int_{C_+}  \frac{q^2}{y(q^2 z-y)} :\!X^+(y)\Upsilon(z)\!:dy 
			\\
			=\res_{y=q^2z} \frac{q^2}{y(q^2 z-y)} :\!X^+(y)\Upsilon(z)\!:dy = - (-1)^{\partial} z^{-1}.
			\end{gather*}
			Here we used $:\!X^+(q^2 z)\Upsilon(z)\!: \ =(-1)^{\partial}$. \end{proof}
	\end{proof}
	In terms of the operators $\Psi^*$, Proposition \ref{Prop: R-matrix rel for Psi} can be rewritten as follows.
	\begin{corollary} \label{corol: interchange for psi}
		The following relation holds
		\begin{multline}
		z^{-1/2} \pp(w/z)  \Psi^*_-(z) \Psi^*_+(w)  
		-w^{-1/2} \pp(z/w)  \left( \frac{q(1-z/w)}{1 - q^2 z/w} \Psi^*_+(w) \Psi^*_-(z) + \frac{1-q^2}{1-q^2 z/w} \Psi^*_-(w) \Psi^*_+(z) \right) \\= (-1)^{\partial}(-q)^{\frac12} q \delta(q^2z, w),
		\end{multline}
		\begin{multline}
		z^{-1/2}  \pp(w/z) \Psi^*_+(z) \Psi^*_-(w) - w^{-1/2} \pp(z/w) \left( \frac{q(1-\frac{z}{w})}{1-q^2 \frac{z}{w}} \Psi^*_-(w) \Psi^*_+(z) +\frac{(1-q^2)\frac{z}{w}}{1-q^2 \frac{z}{w}} \Psi^*_+(w) \Psi^*_-(z) \right)\\ = - (-1)^{\partial} (-q)^{\frac12} \delta(q^2z, w).
		\end{multline}		
	\end{corollary}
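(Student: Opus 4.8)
The plan is to derive the Corollary directly from Proposition \ref{Prop: R-matrix rel for Psi} by the change of variables dictated by the definition $\Psi^*_{\varepsilon}(z) = \Psi_{-\varepsilon}(q^2 z)$; that is, $\Psi^*_-(z) = \Psi_+(q^2 z)$ and $\Psi^*_+(z) = \Psi_-(q^2 z)$. Concretely, in each of the two relations of Proposition \ref{Prop: R-matrix rel for Psi} I would substitute $z \mapsto q^2 z$ and $w \mapsto q^2 w$ simultaneously. Under this rescaling every bilinear expression $\Psi_{\varepsilon}(q^2 z)\,\Psi_{\varepsilon'}(q^2 w)$ becomes the corresponding product of $\Psi^*$ operators (with the sign flip $\varepsilon \mapsto -\varepsilon$ built into the definition), so the left-hand sides turn into exactly the $\Psi^*$-bilinears appearing in the statement.

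The observation that makes the substitution clean is that only ratios of $z$ and $w$ enter the scalar coefficients: the arguments $w/z$ and $z/w$ are invariant under $z\mapsto q^2 z$, $w\mapsto q^2 w$, hence the functions $\pp(w/z)$, $\pp(z/w)$ and all the rational R-matrix coefficients $\tfrac{q(1-z/w)}{1-q^2 z/w}$, etc., are left unchanged. The only scalar effects come from the explicit prefactors and from the delta term. The prefactors transform as $z^{-1/2}\mapsto q^{-1} z^{-1/2}$ and $w^{-1/2}\mapsto q^{-1}w^{-1/2}$, contributing a common overall factor $q^{-1}$ to the whole left-hand side, which I then clear by multiplying the identity through by $q$.

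The one step requiring genuine care is the delta term, and I expect it to be the only place a reader might slip. Since $\delta(x,y)=\sum_{k+l=-1}x^k y^l$ is homogeneous of degree $-1$, the right-hand side $\delta(q^2 z, w)$ becomes, after $z\mapsto q^2 z$, $w\mapsto q^2 w$, the expression $\delta(q^4 z, q^2 w)=q^{-2}\,\delta(q^2 z, w)$. Combining this $q^{-2}$ with the factor $q$ used to clear the prefactor, the constant $(-q)^{1/2}q^2$ in the first relation of Proposition \ref{Prop: R-matrix rel for Psi} becomes $(-q)^{1/2}q^2\cdot q^{-2}\cdot q=(-q)^{1/2}q$, and likewise the constant $-(-q)^{1/2}q$ in the second relation becomes $-(-q)^{1/2}q\cdot q^{-2}\cdot q=-(-q)^{1/2}$; these agree precisely with the right-hand sides asserted in the Corollary. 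The operator $(-1)^{\partial}$ does not involve $z$ or $w$ and is therefore carried through the substitution unchanged. Tracking these scalars is the entire content of the argument, so beyond this bookkeeping no new idea is needed.
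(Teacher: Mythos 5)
Your proposal is correct and is exactly the paper's (implicit) argument: the paper states the Corollary as a direct rewriting of Proposition \ref{Prop: R-matrix rel for Psi} via $\Psi^*_{\varepsilon}(z)=\Psi_{-\varepsilon}(q^2 z)$, i.e.\ the substitution $z\mapsto q^2 z$, $w\mapsto q^2 w$ you perform. Your scalar bookkeeping checks out, including the one genuinely delicate point, namely the degree $-1$ homogeneity $\delta(q^4 z, q^2 w)=q^{-2}\,\delta(q^2 z, w)$, which together with the overall factor $q^{-1}$ from $z^{-1/2}, w^{-1/2}$ converts $(-q)^{\frac12}q^2$ into $(-q)^{\frac12}q$ and $-(-q)^{\frac12}q$ into $-(-q)^{\frac12}$, matching the stated right-hand sides.
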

	
	\paragraph{Matrix notation} Denote $\Psi^*(z)=\Psi^*_+ (z)\otimes v^*_+ + \Psi^*_-(z) \otimes v^*_- \in \Hom(V(\Lambda_i), V(\Lambda_{1-i}) ) \otimes \left( \mathbb{C}^2 \right)^*$. Let us emphasise that $\left( \mathbb{C}^2 \right)^*$ is the dual space to $\mathbb{C}^2$, considered in the definition of $\Phi(z)$.
	
	Denote products 
	\begin{align}
	\Psi^{*, (1)}(z) \Psi^{*, (2)} (w) = \sum_{\epsilon_1, \epsilon_2 = \pm 1} \Psi^*_{\epsilon_1}(z) \Psi^*_{\epsilon_2}(w) \otimes v^*_{\epsilon_1} \otimes v^*_{\epsilon_2}, \\
	\Psi^{*, (2)}(w) \Psi^{*, (1)} (z) = \sum_{\epsilon_1, \epsilon_2 = \pm 1} \Psi^*_{\epsilon_2}(w) \Psi^*_{\epsilon_1}(z) \otimes v^*_{\epsilon_1} \otimes v^*_{\epsilon_2}.
	\end{align}
	Finally denote by $ \Psi^{*,(2)} (w) \Psi^{*,(1)}(z) R(z/w)$ the result of the dual action of $R(z/w)$ on $\left( \mathbb{C}^2 \otimes \mathbb{C}^2 \right)^*$ tensor multiple of  $\Hom(V(\Lambda_i), V(\Lambda_{i}) ) \otimes \left( \mathbb{C}^2  \otimes \mathbb{C}^2 \right)^*$. In other words, we multiply the operator-valued \emph{row vector} on the matrix.
	\begin{theorem} \label{Th: R-matrix for Psi}
		The following relation holds
		\begin{multline}
		z^{-\frac12} \pp(w/z)  \Psi^{*,(1)}(z) \Psi^{*,(2)} (w) \\ = w^{-\frac12} \pp(z/w)  \Psi^{,*(2)} (w) \Psi^{*,(1)}(z) R(z/w) + (-1)^{\partial} (-q)^{\frac12} (q v^*_- \otimes v^*_+ -  v^*_+ \otimes v^*_-) \delta(q^2 z, w).
		\end{multline}
	\end{theorem}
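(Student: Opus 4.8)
The plan is to verify the matrix identity entry-by-entry by expanding both sides in the basis $v^*_{\epsilon_1}\otimes v^*_{\epsilon_2}$ of $\left(\mathbb{C}^2\otimes\mathbb{C}^2\right)^*$ and to recognize that each of the four resulting scalar equations has already been established, exactly as in the proof of Theorem \ref{Th: R-matrix for Phi}. First I would record the dual action of $R(z/w)$: multiplying the operator-valued row vector $\Psi^{*,(2)}(w)\Psi^{*,(1)}(z)$ on the right by the matrix \eqref{eq: Rmatrix formulas} reorganizes its components according to the \emph{columns} of $R$. Since the components of $\Psi^{*,(2)}(w)\Psi^{*,(1)}(z)$ in the basis $v^*_+\otimes v^*_+$, $v^*_+\otimes v^*_-$, $v^*_-\otimes v^*_+$, $v^*_-\otimes v^*_-$ are $\Psi^*_+(w)\Psi^*_+(z)$, $\Psi^*_-(w)\Psi^*_+(z)$, $\Psi^*_+(w)\Psi^*_-(z)$, $\Psi^*_-(w)\Psi^*_-(z)$ respectively, the two diagonal columns act by the identity, while the middle $2\times 2$ block mixes the $v^*_+\otimes v^*_-$ and $v^*_-\otimes v^*_+$ entries with precisely the coefficients $\tfrac{q(1-z/w)}{1-q^2 z/w}$, $\tfrac{1-q^2}{1-q^2 z/w}$, $\tfrac{(1-q^2)z/w}{1-q^2 z/w}$ that appear in Corollary \ref{corol: interchange for psi}.

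Next I would read off the four components. The $v^*_+\otimes v^*_+$ and $v^*_-\otimes v^*_-$ components reduce, respectively, to the interchange relations \eqref{eq: interchange psi+ psi+} and \eqref{eq: interchange psi- psi-}; these carry no delta-function term, which is consistent with the inhomogeneous part of the theorem being supported only on $v^*_-\otimes v^*_+$ and $v^*_+\otimes v^*_-$. The $v^*_-\otimes v^*_+$ component is then exactly the first relation of Corollary \ref{corol: interchange for psi}, and the $v^*_+\otimes v^*_-$ component is exactly its second relation. Thus no new analytic input (no further residue computation of the type in Lemma \ref{lemma: special point for Psi}) is required: the statement is simply a repackaging of the already proved scalar relations into matrix form.

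The only point demanding care is the bookkeeping of the delta-term coefficients. In the theorem the inhomogeneous part is $(-1)^{\partial}(-q)^{1/2}\left(q\,v^*_-\otimes v^*_+ - v^*_+\otimes v^*_-\right)\delta(q^2 z,w)$, so the $v^*_-\otimes v^*_+$ coefficient must equal $(-1)^{\partial}(-q)^{1/2}q$ and the $v^*_+\otimes v^*_-$ coefficient must equal $-(-1)^{\partial}(-q)^{1/2}$; these agree with the right-hand sides of the two relations in Corollary \ref{corol: interchange for psi}. I expect the main (and minor) obstacle to be orienting the dual R-matrix action correctly, namely multiplying the row vector on the right by $R(z/w)$ rather than acting on the left by $R^{-1}(z/w)$ as in the $\Phi$ case, so that the middle-block coefficients attach to the correct operator products $\Psi^*_\mp(w)\Psi^*_\pm(z)$ and the signs and powers of $q$ in the delta terms come out as stated.
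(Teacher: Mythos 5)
Your proposal is correct and is essentially the paper's own proof: the paper disposes of Theorem \ref{Th: R-matrix for Psi} in one line as ``just a reformulation of \eqref{eq: interchange psi- psi-}, \eqref{eq: interchange psi+ psi+} and Corollary \ref{corol: interchange for psi},'' which is precisely your component-by-component identification, including the correct reading of the row-vector-times-$R(z/w)$ convention (columns of $R$) and the matching of the delta-term coefficients $(-1)^{\partial}(-q)^{1/2}q$ on $v^*_-\otimes v^*_+$ and $-(-1)^{\partial}(-q)^{1/2}$ on $v^*_+\otimes v^*_-$. Your write-up merely makes explicit the bookkeeping the paper leaves implicit; no gap.
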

	\begin{proof}
		The theorem is just a reformulation of \eqref{eq: interchange psi- psi-}, \eqref{eq: interchange psi+ psi+} and Corollary \ref{corol: interchange for psi}.
	\end{proof}
	
	\subsection{Special point relation}
	\subsubsection{Special point for $\Phi$}
	\begin{proposition} \label{prop: spec point phi}
		We have the following identity
		\begin{equation} \label{eq: special point phi}
		\left. (-q z)^{-1/2} \ff(w/z) \left( q \Phi_-(z) \Phi_+ (w)  - \Phi_+ (z) \Phi_- (w) \right) \right|_{w=q^2z} =  \frac{(-1)^{\partial}}{z q^2 (1-q^2)}.
		\end{equation}
	\end{proposition}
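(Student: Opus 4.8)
The plan is to reuse the regularized expressions already obtained in the proof of Proposition~\ref{Th: PhiPhi rel} rather than to recompute anything from scratch. Formulas \eqref{eq: reg exp for PhiM PhiP} and \eqref{eq: reg exp for PhiP PhiM} express both $(-q^3 z)^{-1/2}\ff(w/z)\Phi_-(z)\Phi_+(w)$ and $(-q^3 z)^{-1/2}\ff(w/z)\Phi_+(z)\Phi_-(w)$ as ratios whose common denominator is $z(1-q^2 w/z)=z-q^2 w$ and whose numerators are linear combinations of $[\Omega,x_0^-]_{q^2}$ and $[\Omega,x_1^-]_{q^{-2}}$, with $\Omega=\ :\!\Phi_-(z)\Phi_-(w)\!:$. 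The first observation is that this denominator vanishes at $z=q^2 w$ (the source of the $\delta$-term in Proposition~\ref{Th: PhiPhi rel}) but is regular at the point of interest $w=q^2 z$, where it equals $z(1-q^4)$. Hence at $w=q^2 z$ the two products are honest operators, no $\delta$-function can appear, and I may simply substitute.

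Next I would form the combination $q\Phi_-(z)\Phi_+(w)-\Phi_+(z)\Phi_-(w)$ directly at the level of these ratios. Multiplying \eqref{eq: reg exp for PhiM PhiP} by $q$ and subtracting \eqref{eq: reg exp for PhiP PhiM}, the numerator becomes $(qz+qw)[\Omega,x_0^-]_{q^2}-(q+q^{-1})[\Omega,x_1^-]_{q^{-2}}$. Setting $w=q^2 z$ turns $qz+qw$ into $qz(1+q^2)$ and the denominator into $z(1-q^2)(1+q^2)$, so the common factor $(1+q^2)$ cancels (using also $q+q^{-1}=q^{-1}(1+q^2)$). What survives is exactly $q^{-1}\big(q^2 z[\Omega,x_0^-]_{q^2}-[\Omega,x_1^-]_{q^{-2}}\big)\big/\big(z(1-q^2)\big)$, now with $\Omega=\ :\!\Phi_-(z)\Phi_-(q^2 z)\!:$.

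At this stage the bracketed quantity is precisely the left-hand side of Lemma~\ref{lemma: special points} with its parameter equal to $z$ (normal ordering is symmetric, so $:\!\Phi_-(z)\Phi_-(q^2z)\!: = \ :\!\Phi_-(q^2 z)\Phi_-(z)\!:$), and the lemma gives the value $(-1)^\partial q^{-2}$. Therefore the whole expression equals $(-1)^\partial q^{-3}/\big(z(1-q^2)\big)$ when carried with the prefactor $(-q^3 z)^{-1/2}$. The final bookkeeping step is to restore the prefactor in the statement: since $-q^3 z=q^2(-qz)$ one has $(-qz)^{-1/2}=q\,(-q^3 z)^{-1/2}$, and multiplying by this extra $q$ yields $(-1)^\partial q^{-2}/\big(z(1-q^2)\big)=(-1)^\partial/\big(zq^2(1-q^2)\big)$, as claimed. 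The only genuinely delicate point is the algebraic bookkeeping of the second paragraph---recognizing that the linear combination collapses onto the exact quantity computed in Lemma~\ref{lemma: special points}, and that the potentially singular factor $(1+q^2)$ cancels against the denominator---after which the result is immediate; no new contour integral or normal-ordering computation is required.
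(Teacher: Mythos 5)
Your argument is correct and is essentially the paper's own proof: the paper likewise substitutes $w \mapsto q^2 z$ into \eqref{eq: reg exp for PhiM PhiP} and \eqref{eq: reg exp for PhiP PhiM}, arriving (after exactly your $(1+q^2)$ cancellation, which it leaves implicit) at the quotient $\bigl(q^2 z[\,:\!\Phi_-(z)\Phi_-(q^2 z)\!:\,,x_0^-]_{q^2}-[\,:\!\Phi_-(z)\Phi_-(q^2 z)\!:\,,x_1^-]_{q^{-2}}\bigr)/\bigl(qz(1-q^2)\bigr)$, and then concludes by Lemma \ref{lemma: special points}. Your auxiliary observations---that the denominator is regular at $w=q^2z$ so the substitution is legitimate, that symmetry of normal ordering lets the lemma apply with its parameter equal to $z$, and the prefactor conversion $(-qz)^{-1/2}=q\,(-q^3z)^{-1/2}$---are precisely the steps the paper uses tacitly.
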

	\begin{remark}
		Note, that \emph{a priori} LHS of \eqref{eq: special point phi} is not well defined since a coefficient of any power of $z$ is an infinite sum of operators. So we have to prove that the result of substitution exists as well as to find the result. Also note, that we substitute $w \mapsto q^2 z$ to the whole expression, not to the individual multiples; the result of substitution to the individual multiples does not have to exist.
	\end{remark}
	\begin{proof}
		Substituting $w \mapsto q^2z$ to \eqref{eq: reg exp for PhiM PhiP} and \eqref{eq: reg exp for PhiP PhiM} we obtain
		\begin{multline}
		\left. (-q^3 z)^{-1/2} \ff(w/z) \left( q \Phi_-(z) \Phi_+ (w)  - \Phi_+ (z) \Phi_- (w) \right) \right|_{w=q^2 z} =  \\ \frac{  q^2 z[ : \! \Phi_-(z) \Phi_-(q^2 z) \! : \ , x_0^-]_{q^2} - [: \! \Phi_-(z) \Phi_-(q^2 z) \! : \ ,  x_1^- ]_{q^{-2}}}{qz(1-q^2)}.
		\end{multline}
		To finish the proof we apply Lemma \ref{lemma: special points}.
	\end{proof}
	\subsubsection{Special point for $\Psi$}
	\begin{proposition} We have the following identity
		\begin{equation}
		\left. (-qz)^{-1/2} \pp(w/z)  \left( \Psi_-(z) \Psi_+ (w) - q \Psi_+(z) \Psi_- (w) \right) \right|_{z=q^2w} =  \frac{q w^{-1} }{1-q^2} (-1)^{\partial}.
		\end{equation}
	\end{proposition}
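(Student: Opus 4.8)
The plan is to mirror the proof of Proposition~\ref{prop: spec point phi} for the $\Phi$-operators, substituting the special point directly into the regularized expressions \eqref{eq: reg exp PsiP PsiM} and \eqref{eq: reg exp PsiM PsiP} and then invoking Lemma~\ref{lemma: special point for Psi}. The crucial preliminary observation is that, although $\pp(w/z)$ vanishes at $z=q^2 w$ (indeed $\pp(q^{-2})=0$, so that the bare products $\Psi_\pm(z)\Psi_\mp(w)$ are themselves singular there), the right-hand sides of \eqref{eq: reg exp PsiP PsiM}--\eqref{eq: reg exp PsiM PsiP} are normal-ordered products multiplied by rational functions whose only denominator is $1-w/(q^2 z)$. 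At $z=q^2 w$ this denominator equals $1-q^{-4}\neq 0$, so the substitution is regular and every coefficient of the resulting series is a finite sum of operators; this is precisely what makes the left-hand side of the Proposition well defined.

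First I would set $z=q^2 w$ in \eqref{eq: reg exp PsiM PsiP} and \eqref{eq: reg exp PsiP PsiM}, which evaluates $(-qz)^{-1/2}\pp(w/z)\Psi_-(z)\Psi_+(w)$ and $(-qz)^{-1/2}\pp(w/z)\Psi_+(z)\Psi_-(w)$ at the special point. Under this substitution the normal-ordered factor $:\!\Psi_+(z)\Psi_+(w)\!:$ becomes $:\!\Psi_+(q^2 w)\Psi_+(w)\!:$, which equals $\Upsilon(w)$ in the notation of Lemma~\ref{lemma: special point for Psi}, since normal ordering is insensitive to the order of its arguments. Note also that no rescaling of the overall prefactor is required, because \eqref{eq: reg exp PsiP PsiM}--\eqref{eq: reg exp PsiM PsiP} already carry the factor $(-qz)^{-1/2}$ that appears in the Proposition; this is a simplification relative to the $\Phi$-case.

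Next I would form the combination $\Psi_-(z)\Psi_+(w)-q\,\Psi_+(z)\Psi_-(w)$ of these two regularized expressions. Reducing over the common denominator and using $w/z=q^{-2}$, the two commutator structures recombine into a single scalar multiple of $[\Upsilon(w),x_0^+]_{q^2}-q^4 w\,[\Upsilon(w),x_{-1}^+]_{q^{-2}}$, the scalar collapsing to $q/(q^2-1)$ after cancelling the factor $q^4-1=(q^2-1)(q^2+1)$ against the denominator. This bracket is exactly the left-hand side of Lemma~\ref{lemma: special point for Psi} with its variable set equal to $w$, so applying the Lemma replaces it by $-(-1)^{\partial}w^{-1}$ and yields $\tfrac{q}{1-q^2}(-1)^{\partial}w^{-1}$, which is the asserted value.

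I do not anticipate a genuine obstacle, as the argument transcribes the $\Phi$-case computation almost verbatim. The only points demanding care are, first, the justification that the substitution must be performed on the regularized right-hand sides rather than on the individual products $\Psi_\pm(z)\Psi_\mp(w)$ (where it would be ill defined), and second, the bookkeeping of the powers of $q$ in the recombination step, which must be checked to ensure the commutator coefficients assemble into precisely the combination appearing in Lemma~\ref{lemma: special point for Psi}.
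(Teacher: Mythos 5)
Your proposal is correct and matches the paper's own proof essentially step for step: the paper likewise substitutes $z \mapsto q^2 w$ into the regularized expressions \eqref{eq: reg exp PsiP PsiM} and \eqref{eq: reg exp PsiM PsiP}, obtains the combination $\frac{q [\Upsilon(w), x_0^+]_{q^2} - q^5 w [\Upsilon(w), x_{-1}^+]_{q^{-2}}}{q^2-1}$ (your scalar $q/(q^2-1)$ after cancelling $q^2+1$), and concludes by Lemma~\ref{lemma: special point for Psi}. Your preliminary remarks on why the substitution must be made in the regularized form, and on the symmetry $:\!\Psi_+(q^2w)\Psi_+(w)\!:\,=\Upsilon(w)$, are exactly the points the paper uses implicitly.
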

	\begin{proof}
		Let us substitute $z \mapsto q^2 w$ to \eqref{eq: reg exp PsiP PsiM} and \eqref{eq: reg exp PsiM PsiP} 
		\begin{multline}
		\left. (-qz)^{-1/2} \pp(w/z)  \left( \Psi_-(z) \Psi_+ (w) - q \Psi_+(z) \Psi_- (w) \right) \right|_{z=q^2 w} =  \\
		\frac{q [  : \! \Psi_+ (q^2 w) \Psi_+ (w) \! : , x_0^+]_{q^2} - q^5 w [ : \! \Psi_+ (q^{2} w) \Psi_+ (w) \! : , x_{-1}^+ ]_{q^{-2}} }{q^2-1 }.
		\end{multline}
		To finish the proof we applying Lemma \ref{lemma: special point for Psi}.
	\end{proof}
	
	\begin{corollary} \label{corol: special point for Psi}
		For any $q_1 \in \mathbb{C} \backslash \{0\}$ we have the following identity
		\begin{equation}
		\left. (-q_1/z)^{1/2} \pp(w/z)  \left( \Psi^*_+(q q_1 z) \Psi^*_- (q q_1 w) - q \Psi^*_-(q q_1 z) \Psi^*_+ (q q_1 w) \right) \right|_{z=q^2w} = - \frac{  w^{-1} }{1-q^2} (-1)^{\partial}.
		\end{equation}
	\end{corollary}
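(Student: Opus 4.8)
The plan is to deduce the identity directly from the preceding Proposition (the $\Psi$-analogue of Proposition~\ref{prop: spec point phi}) by translating the dual operators $\Psi^*_{\pm}$ back into the operators $\Psi_{\mp}$ and rescaling the spectral parameters. Recall from the definition $\Psi^*_{\varepsilon}(z) = \Psi_{-\varepsilon}(q^2 z)$ that $\Psi^*_+(u) = \Psi_-(q^2 u)$ and $\Psi^*_-(u) = \Psi_+(q^2 u)$. Applying this to each of the four factors, with argument $u \in \{q q_1 z,\, q q_1 w\}$, turns the bracketed operator combination into
\[
\Psi_-(q^3 q_1 z)\Psi_+(q^3 q_1 w) - q\,\Psi_+(q^3 q_1 z)\Psi_-(q^3 q_1 w),
\]
which is exactly the combination $\Psi_-(Z)\Psi_+(W) - q\,\Psi_+(Z)\Psi_-(W)$ of the preceding Proposition, now at the rescaled arguments $Z = q^3 q_1 z$ and $W = q^3 q_1 w$.

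First I would record two elementary compatibilities of this rescaling: since $W/Z = w/z$ we have $\pp(w/z) = \pp(W/Z)$, and the specialization $z = q^2 w$ coincides with $Z = q^2 W$. Hence the entire left-hand side of the corollary equals, up to the outer scalar prefactor, the quantity
\[
\left. \pp(W/Z)\bigl(\Psi_-(Z)\Psi_+(W) - q\,\Psi_+(Z)\Psi_-(W)\bigr)\right|_{Z=q^2 W}.
\]
By the preceding Proposition this equals $(-qZ)^{1/2}\big|_{Z=q^2 W}\cdot \dfrac{q\, W^{-1}}{1-q^2}(-1)^{\partial}$: the scalar $(-qZ)^{-1/2}$ may be moved outside the substitution because it is an invertible scalar function of $Z$, and the substitution itself is legitimate precisely because the Proposition guarantees that the product $\pp(W/Z)\cdot(\,\cdots\,)$ — unlike the individual operator products, as noted in the Remark — has a well-defined value at $Z = q^2 W$.

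It then remains to reassemble the scalars: multiply the value just obtained by the corollary's prefactor $(-q_1/z)^{1/2}$, set $z = q^2 w$, and insert $W = q^3 q_1 w$. The $q$- and $q_1$-powers outside the square roots combine as $q^2 q_1 \cdot q^{-2}q_1^{-1}w^{-1} = w^{-1}$, so that everything collapses to a scalar multiple of $w^{-1}(-1)^{\partial}/(1-q^2)$.

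The one genuinely delicate point — and the place where the sign on the right-hand side is decided — is the bookkeeping of the half-integer powers. The final constant contains the product of two square roots of arguments of the form $(-\,\text{positive})$, namely $(-q_1/(q^2 w))^{1/2}$ from the corollary's prefactor and $(-q^6 q_1 w)^{1/2}$ arising from $(-qZ)^{1/2}\big|_{Z=q^2 W}$ after inserting $W = q^3 q_1 w$. Under the branch convention used throughout for these $(-\,\cdot\,)^{1/2}$ factors, the product of two such roots acquires an extra $-1$ relative to the naive $\sqrt{ab}$; concretely their product is $-q^2 q_1$ rather than $+q^2 q_1$, which is exactly what converts the positive constant into the minus sign of $-\,w^{-1}(-1)^{\partial}/(1-q^2)$. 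I would therefore carry the signs of these two roots explicitly rather than combining them formally, since a careless combination gives the wrong overall sign.
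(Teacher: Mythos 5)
Your proof is correct and is essentially the paper's own: the corollary is stated without a separate proof precisely because it follows from the preceding proposition by the substitution $\Psi^*_{\varepsilon}(u)=\Psi_{-\varepsilon}(q^2u)$ together with the rescaling $Z=q^3q_1z$, $W=q^3q_1w$ (which preserves $\pp(w/z)$ and the locus $z=q^2w$), exactly as you argue. Your sign bookkeeping is also the right one: under the paper's convention that $(-1)^{1/2}$ is a fixed formal square root (cf.\ its explicit appearance in Theorem \ref{th: Vir twist}), the two prefactors multiply as $(-q_1 q^{-2}w^{-1})^{1/2}(-q^6q_1w)^{1/2}=-q^2q_1$, which combined with $q\,W^{-1}=q^{-2}q_1^{-1}w^{-1}$ yields the stated $-\,w^{-1}(-1)^{\partial}/(1-q^2)$.
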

	
	\subsection{Interchanging relation on $\Phi$ and $\Psi$}
	\begin{proposition} \label{prop: interchange Phi and Psi}
		The following relation holds
		\begin{align}
		z^{\frac12} \fp(w/z) \Phi_{\epsilon_1} (z) \Psi^{*}_{\epsilon_2} (w) = w^{\frac12} \fp(z/w) \Psi^{*}_{\epsilon_2} (w)  \Phi_{\epsilon_1} (z).
		\end{align}
	\end{proposition}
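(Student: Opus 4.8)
The plan is to reduce the general statement to the single relation \eqref{eq: interchange psi- phi-}, which is precisely the $(\epsilon_1,\epsilon_2)=(-,-)$ instance of the proposition, and to generate the remaining three sign choices from it. The two moves available are: (i) the defining $q$-commutators \eqref{eq: Phi min x_0} and \eqref{eq: Psi min def}, which realize the ``$+$'' components as $\Phi_+(z)=[\Phi_-(z),x_0^-]_q$ and, after rewriting $\Psi^*_+(w)=\Psi_-(q^2w)$, as $\Psi^*_+(w)=[\Psi^*_-(w),x_0^+]_q$; and (ii) conjugation by the involution $\tilde{\pi}$ of Proposition \ref{prop: pi involution}, which simultaneously flips the sign index of $\Phi$ and of $\Psi^*$.

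Before the reductions I would record two auxiliary ``extreme component'' commutation relations,
\[
[\Phi_-(z),x_0^+]=0,\qquad [\Psi^*_-(w),x_0^-]=0 .
\]
They express that $\Phi_-$ and $\Psi^*_-$ are annihilated by the Chevalley generator not used to create their partner components; they follow from the intertwining characterization of the vertex operators in \cite[Chapter 6]{JM} (the extreme-weight component of an intertwining relation is always a genuine commutation), and could alternatively be verified directly on the bosonic formulas \eqref{eq: Phi- boson} and \eqref{eq: Psi-* boson} by checking that the relevant operator products have no pole.

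With these in hand I would treat the mixed case $(\epsilon_1,\epsilon_2)=(+,-)$ by writing $\Phi_+(z)\Psi^*_-(w)=\Phi_-(z)x_0^-\Psi^*_-(w)-q\,x_0^-\Phi_-(z)\Psi^*_-(w)$, moving $x_0^-$ through $\Psi^*_-(w)$ at no cost, applying \eqref{eq: interchange psi- phi-} to the factor $\Phi_-(z)\Psi^*_-(w)$, and reassembling $\Phi_+(z)$ on the other side. Every step keeps the prefactors $z^{1/2}\fp(w/z)$ and $w^{1/2}\fp(z/w)$ intact, and — the crucial point — no delta-function term appears precisely because $x_0^-$ commutes with $\Psi^*_-$ exactly. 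The two remaining sign choices are then essentially free: conjugating \eqref{eq: interchange psi- phi-} by $\tilde{\pi}$ gives the $(+,+)$ relation, and conjugating the $(+,-)$ relation just obtained gives the $(-,+)$ relation, exactly as Corollary \ref{corollary: involution} was produced from \eqref{eq: interchange phi- phi-}. Here one uses that the $i$-dependent scalars of Proposition \ref{prop: pi involution} contribute the same extra monomial $z^{1/2}w^{-1/2}$ to both sides, so that it cancels and the stated form survives.

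The routine but genuinely delicate point is the bookkeeping of constants: one must verify that the $q$- and sign-prefactors coming from \eqref{eq: Phi min x_0}, \eqref{eq: Psi min def} and from the $\tilde{\pi}$-conjugation all collapse, so that each component satisfies the relation with exactly the same functions $z^{1/2}\fp(w/z)$ and $w^{1/2}\fp(z/w)$ and with no leftover scalar. I expect the main obstacle to be confirming the two auxiliary commutation relations with vanishing right-hand side, since this is exactly the input that guarantees the clean, R-matrix-free and anomaly-free form of the interchange relation — the feature that distinguishes this proposition from the $\Phi\Phi$ and $\Psi^*\Psi^*$ relations of Theorems \ref{Th: R-matrix for Phi} and \ref{Th: R-matrix for Psi}, which do carry delta-function terms.
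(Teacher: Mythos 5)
Your proposal is correct and essentially reproduces the paper's own proof: the paper likewise starts from \eqref{eq: interchange psi- phi-}, invokes exactly your two auxiliary vanishing commutators $[\Phi_-(z),x_0^+]=0$ and $[\Psi_-^*(w),x_0^-]=0$ (cited from \cite[Section 6.3, (6.12)]{JM}), combines them with the $q$-commutators \eqref{eq: Phi min x_0}, \eqref{eq: Psi min def} to get the mixed cases, and takes $(\epsilon_1,\epsilon_2)=(+,+)$ from the $\tilde{\pi}$-conjugation already recorded in \eqref{eq: interchange phi+ psi+}. The only cosmetic deviation is that for $(\epsilon_1,\epsilon_2)=(-,+)$ the paper $q$-commutes \eqref{eq: interchange psi- phi-} with $x_0^+$ directly rather than $\tilde{\pi}$-conjugating the $(+,-)$ case, which avoids the scalar bookkeeping you describe (and which indeed cancels as you claim, since $(-q)^{3(\frac12-i)}=(-q^3)^{\frac12-i}$ offsets the $\Phi$-side constant).
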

	\begin{proof}
		We have already seen the cases  $\epsilon_1= \epsilon_2= \pm$, see \eqref{eq: interchange psi- phi-} and \eqref{eq: interchange phi+ psi+}. To prove remaining cases, let us use a relation from \cite[Section 6.3]{JM} and relation \cite[(6.12)]{JM}
		\begin{align}
		\Psi_-^*(z) x_0^- -  x_0^- \Psi_-^*(z) =&0, & \Phi_-(z) x_0^+ -  x_0^+ \Phi_-(z) =&0.
		\end{align}
		To be combined with \eqref{eq: Phi min x_0} and \eqref{eq: Psi min def}, the relations yield
		\begin{align}
		[\Phi_-(z) \Psi_-^*(w), x^-_0 ]_q = [\Phi_-(z), x^-_0 ]_q \Psi_-^*(w) =\Phi_+(z) \Psi_-^*(w),\\
		[\Phi_-(z) \Psi_-^*(w), x^+_0 ]_q= \Phi_-(z) [\Psi_-^*(w), x^+_0 ]_q  = \Phi_-(z) \Psi_+^*(w).
		\end{align}
		Considering  $q$-commutator of \eqref{eq: interchange psi- phi-} with $x_0^-$ and $x_0^+$, we obtain cases $\epsilon_1=+$, $\epsilon_2=-$ and $\epsilon_1=-$, $\epsilon_2=+$ correspondingly. 
	\end{proof}
	
	\section{Realization of (Twisted) Deformed Virasoro algebra} \label{sec: Vir}
	In this section, we will consider two algebras: \emph{deformed Virasoro algebra} and \emph{twisted deformed Virasoro algebra}. Deformed Virasoro algebra is extensively studied. Twisted Virasoro was defined in \cite{Sh}, though this algebra is considerably less famous.
	
	The algebras depend on two parameters $q_1$, $q_2$. It is also convenient to consider $q_3$ such that $q_1 q_2 q_3 = 1$. In this section we study connection between the algebras and $\qsl$ for $q^2 = q_3$.
	
	To define (twisted) deformed Virasoro algebra, we need the following notation
	\begin{equation}
	\sum_{l=0}^{\infty} f_l x^l = f(x) = \exp \left( \sum_{n=1}^{\infty} \frac{1}{n} \frac{(1-q_1^{n})(1-q_2^{n})}{1+q_3^{-n}} x^n\right).
	\end{equation}
	Note that
	\begin{equation} \label{eq: f via betta}
	f(x) = \frac{1}{1-x} \fp\left(q_1 q x \right) \fp \left( q_1^{-1} q^{-1} x \right).
	\end{equation}
	\subsection{Deformed Virasoro algebra}
	\begin{definition} Deformed Virasoro algebra $\Vir$ is generated by $T_n$ for $n \in \mathbb{Z}$. The defining relation is
		\begin{equation} \label{defeq: Vir}
		\sum_{l=0}^{\infty}f_l T_{n-l} T_{m+l} - \sum_{l=0}^{\infty}f_l T_{n-l} T_{m+l} = -\frac{(1-q_1)(1-q_2)}{1-q_3^{-1}} \left( q_3^{-n} - q_3^{n} \right) \delta_{n+m,0}.
		\end{equation}
		Denote $T(z)= \sum_{n\in  \mathbb{Z}} T_n z^{-n}$, $\delta(x) = \sum_{k \in \mathbb{Z}} x^k$.
		Relation \eqref{defeq: Vir} is equivalent to
		\begin{equation} \label{defeq: Vir current}
		f(w/z) T(z) T(w) - f(z/w) T(w) T(z) = -\frac{(1-q_1)(1-q_2)}{1-q_3^{-1}} \left( \delta\left(\frac{w}{q_3 z}\right) -  \delta\left(\frac{q_3 w}{z}\right) \right).
		\end{equation}	
	\end{definition}
	\paragraph{Representation} Recall that $V_j$ were defined by \eqref{notatio: Vj}.
	\begin{theorem} \label{th: Vir non twist}
		The formula below determines an action of $\Vir$ on $V_j$ for all $j \in \mathbb{Z}$.
		\begin{equation} \label{eq: realization Vir non twist}
		T(z) =  z^{1/2} \frac{q^{3/2}(q_1^{1/2}- q_1^{-1/2})}{\fp(q/q_1)}  \Big(u \Psi_{+}^* (q q_{1} z)  \Phi_{+} (z)  + u^{-1} \Psi_{-}^* (q q_1 z) \Phi_{-} ( z) \Big).
		\end{equation}
	\end{theorem}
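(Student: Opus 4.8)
The plan is to verify the current relation \eqref{defeq: Vir current} directly, rewriting products of vertex operators. Set $C=\frac{q^{3/2}(q_1^{1/2}-q_1^{-1/2})}{\fp(q/q_1)}$ and let $D=\mathrm{diag}(u,u^{-1})$ act on $\mathbb{C}^2$ in the basis $v_+,v_-$; in the matrix notation of Section \ref{Section: JM} the current \eqref{eq: realization Vir non twist} reads $T(z)=C\,z^{1/2}\,\Psi^*(qq_1z)\,D\,\Phi(z)$, the product contracting $(\mathbb{C}^2)^*$ against $\mathbb{C}^2$ through $D$. First I would check that $T(z)$ is a well-defined operator on each $V_j$ of \eqref{notatio: Vj}: the lattice shifts $e^{\pm\alpha/2}$ in $\Psi^*_\epsilon$ and $\Phi_\epsilon$ cancel for each $\epsilon$, so $\Psi^*_\epsilon(qq_1z)\Phi_\epsilon(z)$ preserves the Fock grading and each Fourier coefficient is a finite-degree operator.

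To produce the left-hand side of \eqref{defeq: Vir current} I form $T(z)T(w)$, attaching auxiliary spaces $\mathbb{C}^2_{(1)},\mathbb{C}^2_{(2)}$ to the two factors of $\Phi$ and their duals to the two factors of $\Psi^*$. The middle pair $\Phi^{(1)}(z)\,\Psi^{*,(2)}(qq_1w)$ lives in disjoint auxiliary spaces, so I interchange it by the purely scalar relation of Proposition \ref{prop: interchange Phi and Psi}, bringing $T(z)T(w)$ to the canonical form $\Psi^{*,(1)}(qq_1z)\Psi^{*,(2)}(qq_1w)\,(D\otimes D)\,\Phi^{(1)}(z)\Phi^{(2)}(w)$ times a scalar built from $\fp$. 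I then reorder the two $\Psi^*$'s by Theorem \ref{Th: R-matrix for Psi} and the two $\Phi$'s by Theorem \ref{Th: R-matrix for Phi}; since $\Psi^*$ carries arguments $qq_1z,qq_1w$, both R-matrices have the same argument $z/w$. The regular part then contains $R(z/w)\,(D\otimes D)\,R^{-1}(z/w)$, and because $D\otimes D=\mathrm{diag}(u^2,1,1,u^{-2})$ acts as a scalar on each of the three $R$-invariant weight blocks $v_+\!\otimes v_+$, $\mathrm{span}(v_+\!\otimes v_-,v_-\!\otimes v_+)$, $v_-\!\otimes v_-$, it commutes with $R(z/w)$; the two R-matrices cancel and the reordered product is exactly the form appearing in $T(w)T(z)$.

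The scalar bookkeeping then closes the regular part. The elementary identity $\ff(x)\pp(x)=\tfrac{1}{1-x}$, immediate from \eqref{eqdef: alpha and beta} via $(x;q^4)_\infty=(1-x)(q^4x;q^4)_\infty$, shows that the $\ff$-factor (from reordering $\Phi\Phi$) and the $\pp$-factor (from reordering $\Psi^*\Psi^*$) combine into $\tfrac{1}{1-x}$, and together with the $\fp$-factors produced by the $\Phi$–$\Psi^*$ interchanges this reconstitutes $f$ by \eqref{eq: f via betta}. Consequently the regular parts cancel in $f(w/z)T(z)T(w)-f(z/w)T(w)T(z)$, and all that survives is the contribution of the two delta-function terms of Theorems \ref{Th: R-matrix for Phi} and \ref{Th: R-matrix for Psi}, supported respectively at $z=q^2w=q_3w$ and $w=q^2z=q_3z$.

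Finally I evaluate these two delta contributions, and this is where I expect the main obstacle to lie. On the support of the delta coming from Theorem \ref{Th: R-matrix for Psi}, the covector $q\,v^*_-\!\otimes v^*_+-v^*_+\!\otimes v^*_-$ contracts with $(D\otimes D)\,\Phi^{(1)}(z)\Phi^{(2)}(w)$ to give exactly $q\,\Phi_-(z)\Phi_+(w)-\Phi_+(z)\Phi_-(w)$ at $w=q^2z$ (the $u$'s cancel), which is frozen to a scalar by the special-point Proposition \ref{prop: spec point phi}; symmetrically, the delta from Theorem \ref{Th: R-matrix for Phi} leaves the $\Psi^*$-combination of Corollary \ref{corol: special point for Psi} at $z=q^2w$. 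What remains is to combine these two specialized residues with the constant $C^2$, the surviving prefactors, and the two $(-1)^\partial$ scalars, and to convert $\delta(z,q^2w)$, $\delta(q^2z,w)$ into $\delta(q_3w/z)$, $\delta(w/(q_3z))$; the delicate point is matching the overall coefficient and the opposite signs so that the two terms assemble precisely into $-\frac{(1-q_1)(1-q_2)}{1-q_3^{-1}}\bigl(\delta(w/(q_3z))-\delta(q_3w/z)\bigr)$, the right-hand side of \eqref{defeq: Vir current}.
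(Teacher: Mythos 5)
Your proposal is correct and follows essentially the same route as the paper's proof: the same matrix rewriting $T(z)=C\,\Psi^*(qq_1z)\,\varepsilon_z\,\Phi(z)$, the same use of Proposition \ref{prop: interchange Phi and Psi} and the identity $f(x)=\frac{1}{1-x}\fp(q_1qx)\fp(q_1^{-1}q^{-1}x)$ to reconstitute $f$, the same cancellation of the two R-matrices via $R(z/w)\,\varepsilon_z\otimes\varepsilon_w\,R^{-1}(z/w)=\varepsilon_z\otimes\varepsilon_w$ (for which you supply the block-diagonal justification the paper leaves implicit), and the same freezing of the two delta terms by Proposition \ref{prop: spec point phi} and Corollary \ref{corol: special point for Psi} at $w=q^2z$ and $z=q^2w$ respectively. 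The final coefficient assembly you flag as the delicate point is exactly what the paper finishes: the omitted multiple $\fp\bigl(q^2/(q_1q)\bigr)\fp\bigl(q^{-2}/(q_1q)\bigr)=\frac{1-q_2}{1-q_1^{-1}}\bigl(\fp(q/q_1)\bigr)^2$ combines with the residues $\pm\frac{1}{q(1-q^2)}$ and $C^2$ to give precisely $-\frac{(1-q_1)(1-q_2)}{1-q_3^{-1}}\bigl(\delta(w/(q_3z))-\delta(q_3w/z)\bigr)$, with the opposite signs arising just as you predict.
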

	Denote the obtained representation by $\RepV$.
	
	\begin{remark} \label{remark on second term}
		A bosonization of deformed Virasoro algebra is known since \cite{SKAO}, but our bosonization is a different one. In both cases current $T(z)$ is presented as a sum two summands. Surprisingly, the first summands in both cases are `the same normally ordered exponent of Heisenberg $a_k$'; however, the second ones are different. In \cite{SKAO} the second summand is also an exponent of the same Heisenberg, but this is not true for our bosonization. Note that  $\Psi_{-}^* (q q_1 z) \Phi_{-} ( z) $ is an exponent of Heisenberg $\pi(a_k)$, but not of $a_k$.
	\end{remark}
	
	\begin{proof}
		The proof is basically verification of \eqref{defeq: Vir current}. Let us rewrite \eqref{eq: realization Vir non twist} in the matrix form
		\begin{equation}
		T(z) = \frac{q^{3/2}(q_1^{1/2}- q_1^{-1/2})}{\fp(q/q_1)} \Psi^*(q q_1  z)  \varepsilon_z \Phi(z), \text{ for } \varepsilon_z = z^{1/2} \begin{pmatrix} u & 0 \\ 0 & u^{-1} \end{pmatrix}.
		\end{equation}
		Using \eqref{eq: f via betta}, Proposition \ref{prop: interchange Phi and Psi} and \eqref{eqdef: alpha and beta} we obtain
		\begin{multline} \label{eq: calucaltion1}
		f(w/z)  \left( \Psi^*(q q_1  z)  \varepsilon_z \Phi(z) \right) \left( \Psi^*(q q_1  w) \varepsilon_w \Phi(w) \right) \\= 
		\frac{1}{1-w/z} \fp\left(\frac{q_1 q w}{z} \right) \fp \left(\frac{w}{q_1 q z}  \right) \left( \Psi^*(q q_1 z)  \varepsilon_z \Phi(z) \right) \left( \Psi^*(q q_1 w) \varepsilon_w \Phi(w) \right) \\= 
		\frac{1}{1-w/z} \left(\frac{q q_1 w}{z} \right)^{\frac12} \fp\left(\frac{z}{q_1 q w} \right) \fp \left(\frac{w}{q_1 q z} \right) \left( \Psi^{*, (1)}(q q_1  z)  \Psi^{*,(2)}(q q_1 w) \right) \varepsilon_z \otimes \varepsilon_w \left(\Phi^{(1)}(z)  \Phi^{(2)}(w) \right)\\=
		\fp\left(\frac{z}{q_1 q w} \right) \fp \left(\frac{w}{q_1 q z} \right) (q q_1 z w)^{\frac12}\\  \times\left(z^{-\frac12} \pp \left(\frac{w}z \right) \Psi^{*, (1)}(q q_1  z)  \Psi^{*,(2)}(q q_1 w) \right)  \varepsilon_z \otimes \varepsilon_w \left( z^{-\frac12} \ff \left(\frac{w}z \right) \Phi^{(1)}(z)  \Phi^{(2)}(w) \right).
		\end{multline}
		To continue the calculation, we apply Theorems \ref{Th: R-matrix for Phi} and \ref{Th: R-matrix for Psi}. The RHS of \eqref{eq: calucaltion1} can be 
		presented as sum of three summands. The first summand is
		\begin{multline} \label{eq: calculation2}
		\fp\left(\frac{z}{q_1 q w} \right) \fp \left(\frac{w}{q_1 q z} \right) (q q_1 z w)^{\frac12} \\ \times \left(w^{-\frac12} \pp(z/w)  \Psi^{*,(2)}(q q_1 w) \Psi^{*, (1)}(q q_1  z) \right) R(z/w) \varepsilon_z \otimes \varepsilon_w R^{-1}(z/w) \left( w^{-\frac12} \ff(z/w) \Phi^{(2)}(w) \Phi^{(1)}(z) \right)\\=
		\frac{1}{1-z/w} \left(\frac{q q_1 z }{w} \right)^{\frac12} \fp\left(\frac{z}{q_1 q w} \right) \fp \left(\frac{w}{q_1 q z} \right)   \left( \Psi^{*,(2)}(q q_1 w) \Psi^{*, (1)}(q q_1  z)  \right) \varepsilon_z \otimes \varepsilon_w  \left(\Phi^{(2)}(w) \Phi^{(1)}(z) \right)\\=
		\frac{1}{1-z/w} \fp\left(\frac{z}{q_1 q w} \right) \fp \left(\frac{q_1 q z}{w} \right)   \left( \Psi^{*}(q q_1 w) \varepsilon_w \Phi(w)  \right) \left( \Psi^{*}(q q_1  z) \varepsilon_z \Phi(z) \right)\\=
		f(z/w)  \left( \Psi^{*}(q q_1 w) \varepsilon_w \Phi(w)  \right) \left( \Psi^{*}(q q_1  z) \varepsilon_z \Phi(z) \right).
		\end{multline}
		Here we have used  Proposition \ref{prop: interchange Phi and Psi} and an important property 
		\begin{equation} \label{eq: prop of epsilon}
		R(z/w) \varepsilon_z \otimes \varepsilon_w R^{-1}(z/w) = \varepsilon_z \otimes \varepsilon_w.
		\end{equation}
		The second summand without factor $\fp\left(\frac{z}{q_1 q w} \right) \fp \left(\frac{w}{q_1 q z} \right)$ is
		\begin{gather*}
		(q q_1 z w)^{\frac12}\!\!\left( z^{-\frac12} \pp\left(\frac{w}{z}\right) \Psi^{*,(\hspace{-0.07em}1\hspace{-0.07em})}\!(q q_1  z)  \Psi^{*,(\hspace{-0.07em}2\hspace{-0.07em})}\!(q q_1 w) \right)\!\varepsilon_z\!\otimes \varepsilon_w\!\left(\! (-q)^{\frac12} (q^{-1} v_-{\otimes}v_+{-}q^{-2} v_+{\otimes}v_-) \delta(z,q^2 w)\!\right)\!(-1)^{\partial}\\
		=(q q_1)^{\frac12} zw \left( z^{-\frac12} \pp(w/z) \Psi^{*, (1)}(q q_1  z)  \Psi^{*,(2)}(q q_1 w) \right)  \left( (-q)^{\frac12} (q^{-1} v_-{\otimes}v_+ - q^{-2} v_+{\otimes}v_-) \delta(z,q^2 w) \right)(-1)^{\partial}\\=
		q^{-1}  zw  (-q_1/z)^{\frac12} \pp(w/z)  \Big( q \Psi^{*}_-(q q_1  z)  \Psi^{*}_+(q q_1 w) - \Psi^{*}_+(q q_1  z)  \Psi^{*}_-(q q_1 w)  \Big) (-1)^{\partial} \delta(z,q^2 w)\\=
		q^{-1}  zw \frac{  w^{-1} }{1-q^2} \delta(z,q^2 w)= \frac{ 1 }{q(1-q^2)} \delta(z/q^2 w).
		\end{gather*}
		Here we used Corollary \ref{corol: special point for Psi}.
		
		The third summand  without factor $\fp\left(\frac{z}{q_1 q w} \right) \fp \left(\frac{w}{q_1 q z} \right)$ is
		\begin{gather*}
		q q_1 (zw)^{\frac12} \left( (-q)^{\frac12} (q v^*_- \otimes v^*_+ -  v^*_+ \otimes v^*_-) \delta(q^3 q_1 z, q q_1w)  \right) \varepsilon_z \otimes \varepsilon_w \left( z^{-\frac12} \ff(w/z) \Phi^{(1)}(z)  \Phi^{(2)}(w)  \right) (-1)^{\partial}\\
		=q q_1 zw \left( (-q)^{\frac12} (q v^*_- \otimes v^*_+ -  v^*_+ \otimes v^*_-) \delta(q^3 q_1 z, q q_1w)  \right)  \left( z^{-\frac12} \ff(w/z) \Phi^{(1)}(z)  \Phi^{(2)}(w)  \right) (-1)^{\partial}\\=
		- q^2 q_1  zw \ (-qz)^{-\frac12} \ff(w/z)   \Big( q \Phi_-(z) \Phi_+(w) -  \Phi_+(z)  \Phi_-(w)   \Big) (-1)^{\partial} \delta(q^3 q_1 z, q q_1w) \\=
		-q^2 q_1  zw   \frac{1}{z q^2 (1-q^2)}  \delta(q^3 q_1 z, q q_1w)=- \frac{1}{q (1-q^2)}  \delta(q^2 z/w).
		\end{gather*}
		Here we used Proposition \ref{prop: spec point phi}
		
		When we calculated the second and the third summands, we have omitted the multiple
		\begin{equation}
		\fp\left(\frac{q^2}{q_1 q } \right) \fp \left(\frac{q^{-2}}{q_1 q } \right)   =\frac{1-q^{-2} q_1^{-1}}{1- q_1^{-1}} \left( \fp(q/q_1)  \right)^2 = \frac{1-q_2}{1- q_1^{-1}} \left( \fp(q/q_1)  \right)^2.
		\end{equation}
		So the delta-function coefficient is
		\begin{equation}
		\frac{1-q_2}{1- q_1^{-1}} \left( \fp(q/q_1)  \right)^2 \times \frac{1}{q(1-q^2)}= \frac{(1-q_1)(1-q_2)}{(1-q_3^{-1})}  \left( \frac{\fp(q/q_1)}{q^{3/2}(q_1^{1/2}-q_1^{-1/2})}  \right)^2.
		\end{equation}
		So we have proven
		\begin{multline}
		f(w/z)  \left( \Psi^*(q q_1  z)  \varepsilon_z \Phi(z) \right) \left( \Psi^*(q q_1  w) \varepsilon_w \Phi(w) \right) - f(z/w)  \left( \Psi^{*}(q q_1 w) \varepsilon_w \Phi(w)  \right) \left( \Psi^{*}(q q_1  z) \varepsilon_z \Phi(z) \right) \\=
		\frac{(1-q_1)(1-q_2)}{(1-q_3^{-1})}  \left( \frac{\fp(q/q_1)}{q^{3/2}(q_1^{1/2}-q_1^{-1/2})}  \right)^2  \left( \delta \left( \frac{z}{q^2 w} \right)  -  \delta \left( \frac{q^2 z}{w} \right)  \right).
		\end{multline}
		Evidently, this is equivalent to the theorem.
	\end{proof}
	\paragraph{Connection with Verma module.} \emph{Highest weight vector} $| \lambda \rangle$ for $\Vir$ with \emph{highest weight} $\lambda \in \mathbb{C}$ in a $\Vir$-module is defined by the following properties
	\begin{align} \label{eq: highest weight def}
	T_0 | \lambda \rangle &= \lambda | \lambda \rangle, & T_n |\lambda \rangle&=0 \quad \text{for $n>0$.}
	\end{align}
	Denote by $\jvec = 1 \otimes \mathbb{C} e^{\Lambda_i + \lfloor \frac{j}{2} \rfloor \alpha} \in V_j$ the highest weight vector with respect to Heisenberg algebra.
	\begin{proposition} \label{prop: highest weight non twist}
		Vector $\jvec \in \RepV$ is a highest weight vector for $\Vir$ with the highest weight 
		\begin{equation}\label{eq:lambda uj}
		\lambda_{u,j} = (-q)^{1/2} (q q_1)^{j/2} u + \left( (-q)^{1/2} (q q_1)^{j/2} u \right)^{-1}.
		\end{equation}
	\end{proposition}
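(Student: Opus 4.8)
The plan is to compute $T(z)\jvec$ directly from the bosonic formulas \eqref{eq: Phi- boson}, \eqref{eq: Psi-* boson} and to read off the modes $T_n$ from the powers of $z$, using \eqref{eq: realization Vir non twist}. The guiding principle is that $\jvec$ is a Heisenberg highest weight vector, so every annihilation exponential $\exp(\sum_{n>0}a_n(\dots)z^{-n})$ collapses to the identity as soon as it reaches $\jvec$; the only surviving $z$-dependence comes from the zero modes $(-q^3z)^{\pm\partial/2}$ and from the creation exponentials $\exp(\sum_{n>0}a_{-n}(\dots)z^{n})$, and the latter produce only non-negative powers of $z$. This is exactly the structural feature that will give both claims in \eqref{eq: highest weight def} at once.

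First I would evaluate the inner operator. Using \eqref{eq: Phi- boson} and the identity $(\alpha,\Lambda_i+\lfloor j/2\rfloor\alpha)=j$, one gets $\Phi_-(z)\jvec=(-q^3z)^{j/2}\exp\!\big(\sum_{n>0}\tfrac{a_{-n}}{[2n]}q^{7n/2}z^n\big)$ applied to a Heisenberg highest weight vector of charge $j+1$. Then I would apply $\Psi^*_-(qq_1z)$: the single nontrivial contraction is between its annihilation part and the creation part of $\Phi_-$, and I would compute it either by a one-line commutator or by invoking \eqref{eq: Psi Phi norm ord}, finding that it is governed by the scalar $1/\fp(z/w)$ (this uses $\log\fp(x)=-\sum_{m}x^m/(m(q^m+q^{-m}))$). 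Setting $w=qq_1z$ freezes that scalar to the constant $1/\fp(1/(qq_1))$, while the two zero-mode operators contribute $(-q^3z)^{j/2}(-q^4q_1z)^{-(j+1)/2}$ and their $e^{\pm\alpha/2}$ parts cancel, so the output again lies in the charge-$j$ space.

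The decisive bookkeeping step is that the prefactor $z^{1/2}$ in \eqref{eq: realization Vir non twist} together with these zero-mode powers multiply to $z^0$, since $\tfrac12+\tfrac{j}{2}-\tfrac{j+1}{2}=0$. Hence each summand equals a constant times $z^0\exp(\sum_{n>0}a_{-n}(\dots)z^n)\jvec$, so $T(z)\jvec$ contains only non-negative powers of $z$; this already yields $T_n\jvec=0$ for $n>0$. Reading the $z^0$ coefficient, where every creation exponential equals $1$, leaves a pure scalar multiple of $\jvec$, which is therefore $T_0\jvec$ with no descendant admixture. For the $\Psi^*_-\Phi_-$ summand this scalar is $Cu^{-1}(-q^3)^{j/2}(-q^4q_1)^{-(j+1)/2}/\fp(1/(qq_1))$ with $C=q^{3/2}(q_1^{1/2}-q_1^{-1/2})/\fp(q/q_1)$; the simplification that makes it collapse is the identity $\fp(q/q_1)\,\fp(1/(qq_1))=1-q_1^{-1}$, which follows because $a^m+b^m=q_1^{-m}(q^m+q^{-m})$ for $a=q/q_1$, $b=1/(qq_1)$. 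After this cancellation the summand contributes exactly $\big((-q)^{1/2}(qq_1)^{j/2}u\big)^{-1}$, matching the second term of \eqref{eq:lambda uj}.

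The main obstacle I anticipate is the $+$-summand: the formulas \eqref{eq: Phi- boson}–\eqref{eq: Psi-* boson} give only $\Phi_-$ and $\Psi^*_-$ explicitly, so I must either extract honest bosonic formulas for $\Phi_+,\Psi^*_+$ from the $q$-commutator definitions \eqref{eq: Phi min x_0}, \eqref{eq: Psi min def} and rerun the same contraction bookkeeping, or deduce its value from the $-$-summand by conjugating with $\tilde\pi$ via Proposition \ref{prop: pi involution}; either route should reproduce the reciprocal constant $(-q)^{1/2}(qq_1)^{j/2}u$, so that the two summands sum to $\lambda_{u,j}$. Beyond this, the part most prone to error is keeping the half-integer powers of $-q$ and of $z$ coherent across the two summands, which I would control by fixing once and for all a single branch of $(-q)^{1/2}$ and tracking it through the zero-mode factors.
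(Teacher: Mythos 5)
Your proposal is correct and takes essentially the same route as the paper: the paper likewise evaluates the $\Psi_-^*\Phi_-$ summand by specializing the normal-ordering relation \eqref{eq: Psi Phi norm ord} at $w=qq_1z$, where the identity $\fp(q/q_1)\,\fp(1/(qq_1))=1-q_1^{-1}$ collapses the constant to $(-q)^{-1/2}(qq_1)^{-j/2}$ as in \eqref{eq: Phi Psi weight}--\eqref{eq:half of highest weight minus}, and it obtains the $+$-summand precisely by your second proposed route, conjugating by $\tilde{\pi}$ via Proposition \ref{prop: pi involution} rather than re-bosonizing $\Phi_+,\Psi_+^*$. The only ingredient you leave implicit in that step is the identification $\tilde{\pi}\jvec\propto|1-j\rangle$ (the paper's Lemma \ref{pi inv lemma weight}), after which your degree bookkeeping — only non-negative powers of $z$ in $T(z)\jvec$, with the $z^0$ coefficient a pure multiple of $\jvec$ — matches the paper's conclusion.
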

	\begin{proof}
		Using \eqref{eq: Psi Phi norm ord}, we obtain
		\begin{multline} \label{eq: Phi Psi weight}
		\frac{q^{3/2}(q_1^{1/2}- q_1^{-1/2})}{\fp(q/q_1)} z^{1/2}\Psi_{-}^* (q q_1 z) \Phi_{-} ( z) = \frac{q^{3/2}(q_1^{1/2}- q_1^{-1/2})}{\fp(q/q_1)} \frac{(-q^3 \times q q_1 z)^{-1/2}}{\fp(1/q q_1)} z^{1/2}:\!\Psi_{-}^* (q q_1 z) \Phi_{-} ( z)\!: \\=
		\frac{q^{3/2}(q_1^{1/2}- q_1^{-1/2})(-q^3 \times q q_1 )^{-1/2}}{(1-1/q_1)}  :\!\Psi_{-}^* (q q_1 z) \Phi_{-} ( z)\!: = (-q)^{-1/2}  :\!\Psi_{-}^* (q q_1 z) \Phi_{-} ( z)\!:.
		\end{multline}
		Using \eqref{eq: Phi Psi weight} and formulas for explicit bosonization \eqref{eq: Phi- boson} and \eqref{eq: Psi-* boson}, we obtain 
		\begin{equation} \label{eq:half of highest weight minus}
		z^{1/2} \frac{q^{3/2}(q_1^{1/2}- q_1^{-1/2})}{\fp(q/q_1)}  \Psi_{-}^* (q q_{1} z)  \Phi_{-} (z) \jvec = (-q)^{-1/2} (qq_1)^{-j/2} \jvec + O(z).
		\end{equation}
		here $O(z)$ is a formal power series, which vanishes at $z=0$, i.e. $\sum_{n>0} \alpha_n z^n$.
		\begin{lemma} \label{pi inv lemma weight}
			Vector $\tilde{\pi} \jvec$ coincides up to a scale with vector $| 1 - j \rangle$.
		\end{lemma}
		\begin{proof}[Sketch of a proof.]
			Let us consider two grading on $V(\Lambda_0) \oplus V(\Lambda_1)$
			\begin{align}
			&\degp \jvec = j(j-1)/4  & &\deg_{K} v = j \quad \text{iff $Kv=q^{j}v$} \label{eq: deg fr1} \\
			&\degp a_{-k} =  k & & \label{eq: deg fr2}
			\end{align}
			One can check that
			\begin{align}
			\deg_{K} \left( \pi(v) \right) &= 1 - \deg_K v&		   \degp \left( \pi(v) \right) &= \deg_{\text{pr}} v
			\end{align}
			Up to a scale, vector $\jvec$ is the only vector with $\deg_K=j$ and $\degp = j(j-1)/4$.
		\end{proof}
		Let us apply $\pi$-involution to \eqref{eq:half of highest weight minus}; Proposition \ref{prop: pi involution} and Lemma \ref{pi inv lemma weight} imply
		\begin{multline} 
		z^{1/2} \frac{q^{3/2}(q_1^{1/2}- q_1^{-1/2})}{\fp(q/q_1)} (-q)^{3 \left( \frac12 - i \right)} \left(q_1 q^3 z\right)^{-\frac12} \Psi_{+}^{(i, 1-i),*} \!(q q_{1} z) \ (-q^3)^{i-\frac12} z^{\frac12}\Phi^{(1-i,i)}_{+} (z) | 1 - j \rangle \\= (-q)^{-1/2} (qq_1)^{-j/2} |1-j \rangle + O(z).
		\end{multline} 
		Replacing of $j \mapsto 1-j$, we obtain 
		\begin{equation} \label{eq:half of highest weight plus}
		z^{1/2} \frac{q^{3/2}(q_1^{1/2}- q_1^{-1/2})}{\fp(q/q_1)} \Psi_{+}^* (q q_{1} z)  \Phi_{+} (z) \jvec = (-q)^{1/2} (qq_1)^{j/2} \jvec + O(z).
		\end{equation}
		Comparison of \eqref{eq:half of highest weight minus} and \eqref{eq:half of highest weight plus} with \eqref{eq: realization Vir non twist} finishes the proof. 
	\end{proof}

	\emph{Verma module} $M(\lambda)$ is a module with cyclic highest weight vector $|\lambda \rangle$ and without any other relations apart from \eqref{eq: highest weight def}.  Verma module $M(\lambda)$ enjoys a universal property: it maps to any module with a highest weight vector of weight $\lambda$.
	According to Proposition \ref{prop: highest weight non twist}, there is a natural map $\phi_{u,j} \colon M(\lambda_{u,j}) \rightarrow \RepV$. We will say that $\lambda$ is generic if $\lambda\neq \pm (q_1^{r/2}q_2^{s/2}+q_1^{-r/2}q_2^{-s/2})$ for $r,s \in \mathbb{Z}_{\geq 1}$.
	
	\begin{proposition}
		For generic $\lambda$ the Verma module $M(\lambda)$ is irreducible. Dimension of $n$th graded component is $p(n)$, i.e. the number of partitions of $n$ elements.
	\end{proposition}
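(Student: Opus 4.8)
The plan is to bound the graded dimension from above by a PBW-type spanning argument, and then to obtain both the reverse inequality and irreducibility simultaneously from the non-degeneracy of a contravariant form, whose zero locus I would control through the bosonization of Theorem~\ref{th: Vir non twist}.

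First I would fix the grading $\deg T_{-n}=n$, $\deg|\lambda\rangle=0$, under which \eqref{defeq: Vir} is homogeneous, so that $M(\lambda)=\bigoplus_{n\ge0}M(\lambda)_n$ with $M(\lambda)_n$ spanned by monomials $T_{-a_1}\cdots T_{-a_k}|\lambda\rangle$, $\sum a_i=n$. Since $f_0=1$, the defining relation reads $T_nT_m-T_mT_n=-\sum_{l\ge1}f_l\,(T_{n-l}T_{m+l}-T_{m-l}T_{n+l})$ away from $n+m=0$, and I would use this as a rewriting rule: applied to an out-of-order adjacent pair $T_{-a}T_{-b}$ with $a<b$ it produces the ordered product $T_{-b}T_{-a}$ together with correction terms whose leftmost index is strictly more negative. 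Acting on any fixed vector only finitely many corrections survive, because the positive modes $T_{>0}$ eventually reach $|\lambda\rangle$ and act by $0$, or by $\lambda$ through $T_0$; an induction on $n$ and on a secondary order then shows that $M(\lambda)_n$ is spanned by the ordered monomials $T_{-\lambda_1}\cdots T_{-\lambda_k}|\lambda\rangle$ with $\lambda_1\ge\cdots\ge\lambda_k\ge1$, whence $\dim M(\lambda)_n\le p(n)$.

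Next I would introduce the contravariant form: the unique bilinear form on $M(\lambda)$ with $\langle|\lambda\rangle,|\lambda\rangle\rangle=1$ and $T_n^{\dagger}=T_{-n}$. Distinct graded components are orthogonal, and the radical of the form is the maximal proper graded submodule, so $M(\lambda)$ is irreducible precisely when the Gram matrix $G_n(\lambda)=\big(\langle T_{-\mu}|\lambda\rangle,T_{-\nu}|\lambda\rangle\rangle\big)_{\mu,\nu\vdash n}$ is non-singular for every $n$. Non-singularity is also exactly what promotes the spanning set to a basis: if $\det G_n(\lambda)\ne0$ then the ordered monomials are linearly independent, turning the inequality above into $\dim M(\lambda)_n=p(n)$. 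Thus both assertions follow once I show $\det G_n(\lambda)\ne0$ for generic $\lambda$.

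To locate the zeros of the polynomial $\det G_n(\lambda)$ I would use the realization $\RepV$. Every $\lambda$ equals $\lambda_{u,j}$ for a suitable choice of $(u,j)$, since $u\mapsto\lambda_{u,j}$ in \eqref{eq:lambda uj} is onto $\mathbb{C}$, and the morphism $\phi_{u,j}\colon M(\lambda_{u,j})\to\RepV$ sends $|\lambda\rangle$ to $\jvec$. Endowing the Fock module $\RepV$ with its standard Heisenberg form (non-degenerate, with $a_n^{\dagger}\propto a_{-n}$), I would verify from \eqref{eq: realization Vir non twist} that $T_n^{\dagger}=T_{-n}$, so that $\phi_{u,j}$ is compatible with the two forms and $\det G_n(\lambda)$ measures $\ker\phi_{u,j}$. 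A reducible $M(\lambda)$ must contain a singular vector, which forces a resonance between Fock modules of different highest weights, i.e. a nonzero screening/intertwining map; these resonances occur exactly at $\lambda=\pm(q_1^{r/2}q_2^{s/2}+q_1^{-r/2}q_2^{-s/2})$ with $r,s\ge1$. Hence the zero set of $\det G_n$ is contained in the non-generic locus, giving irreducibility and $\dim M(\lambda)_n=p(n)$ for generic $\lambda$.

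The main obstacle is this last step, namely identifying the exact zero locus of $\det G_n(\lambda)$ with the stated resonance set; this is in essence the deformed-Virasoro Kac determinant, and the bosonic picture is the tool I would use to pin it down. Already establishing that $\det G_n\not\equiv0$ (enough to force irreducibility off a proper subvariety) is cheaper: a leading-order analysis in $\lambda$, where the top power of $\det G_n$ comes from the diagonal pairings and its coefficient is a nonzero product of the structure constants $f_l$, suffices. Upgrading this to the precise resonance locus demanded by the definition of generic is where the genuine work lies.
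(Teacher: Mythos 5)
Your reduction of the proposition is set up correctly and is the standard one: the grading with $\deg T_{-n}=n$, the rewriting argument showing the ordered monomials span (hence $\dim M(\lambda)_n\le p(n)$, in the spirit of Lemma~\ref{lemma: prePBW}), the contravariant form with $T_n^{\dagger}=T_{-n}$ (which is indeed compatible with \eqref{defeq: Vir}), and the equivalence ``$\det G_n(\lambda)\neq 0$ for all $n$ $\Longleftrightarrow$ irreducibility $+$ the spanning set is a basis.'' But note that in this paper \emph{generic} is not a Zariski-genericity placeholder: it is the explicit condition $\lambda\neq \pm\bigl(q_1^{r/2}q_2^{s/2}+q_1^{-r/2}q_2^{-s/2}\bigr)$, $r,s\ge 1$. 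So the proposition \emph{is} the statement that the zero locus of every $\det G_n$ lies inside that set --- i.e.\ the Kac--Shapovalov determinant formula for $\Vir$ --- and this is exactly the step you defer (``where the genuine work lies''). Your cheaper fallback, showing $\det G_n\not\equiv 0$ by a leading-order analysis in $\lambda$, only gives irreducibility away from an unidentified countable set and therefore does not prove the stated claim.

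The bridge you sketch to close the gap would also not work as written: from ``$M(\lambda)$ reducible, hence a singular vector exists'' you cannot conclude a resonance between Fock modules, because the singular vector may lie in $\ker\phi_{u,j}$, in which case it induces nothing on $\RepV$; controlling that kernel is precisely the problem you are trying to solve, so the argument is circular. (The asserted compatibility of $\phi_{u,j}$ with the Heisenberg contravariant form on $\RepV$, i.e.\ $T_n^{\dagger}=T_{-n}$ for the current \eqref{eq: realization Vir non twist}, is likewise a nontrivial unverified claim for this particular bosonization --- all the more so since, as Remark~\ref{remark on second term} notes, the second summand lives in the $\pi$-twisted Heisenberg.) For comparison, the paper does not prove the determinant formula either: it invokes \cite[Th.~3.3]{Bouwknegt:1998}, established via the bosonization and screening operators of \cite{SKAO}, and offers an alternative derivation from irreducibility of tensor products of toroidal Fock modules \cite[Lem~3.1]{FFJMM} together with \cite{N16}, \cite{FHSSY}. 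So your proposal reproduces the standard reduction but is missing, at its core, exactly the external input the paper cites.
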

	
	This proposition follows from the fact that determinant of the Shapovalov form for such $\lambda$ is nonzero, this fact was proven in \cite[Th. 3.3]{Bouwknegt:1998}, using \cite{SKAO}. One can also deduce this from the irreducibility of tensor product of Fock modules of toroidal algebra $U_{q_1,q_2,q_3}(\ddot{\mathfrak{gl}}_1)$ \cite[Lem 3.1]{FFJMM} and relations to $W$-algebras \cite{N16} \cite{FHSSY}.
	
	We will say that pair $u,j$ is generic if the corresponding highest weight $\lambda_{u,j}$ is generic.
	
	\begin{corollary}
		For generic values of $u,j$ the module $\RepV$ is irreducible. The natural map $\phi_{u,j} \colon M(\lambda_{u,j}) \rightarrow \RepV$ is an isomorphism.
	\end{corollary}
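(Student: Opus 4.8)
The plan is to exhibit $\phi_{u,j}$ as a graded isomorphism, from which irreducibility of $\RepV$ is automatic. By Proposition~\ref{prop: highest weight non twist} the vector $\jvec$ is a highest weight vector of weight $\lambda_{u,j}$, so the universal property of the Verma module supplies the canonical $\Vir$-homomorphism $\phi_{u,j}\colon M(\lambda_{u,j}) \to \RepV$ sending the cyclic generator $|\lambda_{u,j}\rangle$ to $\jvec$. In particular $\phi_{u,j}$ is nonzero. Since $u,j$ are generic, $\lambda_{u,j}$ is generic, and by the preceding Proposition $M(\lambda_{u,j})$ is irreducible; the kernel of $\phi_{u,j}$ is then a proper submodule of an irreducible module, hence $0$, so $\phi_{u,j}$ is injective.

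To upgrade injectivity to surjectivity I would compare graded dimensions. Grade $M(\lambda_{u,j})$ by $\deg |\lambda_{u,j}\rangle = 0$ and $\deg T_{-n} = n$, and grade $\RepV = V_j$ by $\deg \jvec = 0$ and $\deg a_{-k} = k$. From the explicit current \eqref{eq: realization Vir non twist} and the bosonizations \eqref{eq: Phi- boson} and \eqref{eq: Psi-* boson} one reads off that the mode $T_{-n}$, the coefficient of $z^n$ in $T(z)$, raises this grading on $V_j$ by exactly $n$: each creation operator $a_{-m}$ enters $\Phi_-$, $\Psi_-^*$ and their $\pi$-images always paired with $z^m$, while the lattice shifts cancel in the composition $\Psi^*_\varepsilon(q q_1 z)\Phi_\varepsilon(z)$, which therefore preserves $V_j$. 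Hence $\RepV$ is a graded $\Vir$-module and $\phi_{u,j}$ is degree preserving. The $n$th graded components then have equal dimension $p(n)$ on both sides: on $\RepV$ because the monomials in the $a_{-k}$ of total degree $n$ are counted by the partitions of $n$, and on $M(\lambda_{u,j})$ by the preceding Proposition.

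A degree preserving injection between graded vector spaces whose $n$th components have equal finite dimension $p(n)$ is bijective in each degree, hence an isomorphism. Therefore $\phi_{u,j}$ is an isomorphism of $\Vir$-modules, and $\RepV \cong M(\lambda_{u,j})$ is irreducible, as claimed.

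The only step that is not purely formal is the grading claim of the second paragraph: one must check that $T_{-n}$ acts with degree precisely $n$ and that every graded piece of $\RepV$ is finite dimensional. This is exactly the (here only sketched) grading discussion accompanying Proposition~\ref{prop: highest weight non twist}, and it is where a little care with the $z^{\pm \partial/2}$ factors is needed, since they contribute a fixed $z$-power on the weight-homogeneous space $V_j$; once it is in place the corollary reduces to the standard principle that an irreducible source together with matching finite graded dimensions forces an isomorphism. Note that injectivity is the only place genericity is used, whereas the identity of graded dimensions holds for all $u,j$.
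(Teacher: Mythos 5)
Your proof is correct and follows essentially the same route as the paper: injectivity from irreducibility of the generic Verma module $M(\lambda_{u,j})$, then equality of graded dimensions $p(n)$ on both sides forces the degree-preserving injection $\phi_{u,j}$ to be an isomorphism, with your second paragraph merely spelling out the grading check the paper leaves implicit. One small caveat: your closing claim that the graded dimensions of $M(\lambda_{u,j})$ equal $p(n)$ for \emph{all} $u,j$ overstates what the paper provides, since its preceding proposition asserts this only for generic $\lambda$ (via nonvanishing of the Shapovalov determinant), though this does not affect your argument in the generic case.
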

	\begin{proof}
		Note that dimensions of graded components of both $M(\lambda)$ and $\RepV$  equals to $p(n)$, in particular they coincide. If $M(\lambda_{u,j})$ is irreducible, then the map $\phi_{u,j} \colon M(\lambda_{u,j}) \rightarrow \RepV$ is an isomorphism.
	\end{proof}
	\begin{remark}
		As it was mentioned in Remark \ref{remark on second term}, another bosonization of $\Vir$ was constructed in \cite{SKAO}. Moreover, their formula for the highest weight essentially coincides with our formula \eqref{eq:lambda uj}. Namely, in notation of \cite[Sec. 3]{FF} the highest weight of the representation on the Fock space $\pi_\mu$ equals to $\lambda_{u,j}$ if $q^\mu$ in notation of \emph{loc. cit.} equals to $(-q^3)^{\frac12} (qq_1)^ju$ in notation of this paper (note that parameters $q,p$ in \emph{loc. cit.} correspond to $q_1, q_3^{-1}$ in this paper). For generic $u,j$ these modules are isomorphic since they both are isomorphic to irreducible Verma module.
		
	\end{remark}
	\subsection{Twisted Deformed Virasoro algebra}
	\begin{definition} Twisted deformed Virasoro algebra is generated by $T_r$ for $r \in 1/2+\mathbb{Z}$. The defining relation is
		\begin{equation} \label{defeq: twisted Vir}
		\sum_{l=0}^{\infty}f_l T_{r-l} T_{s+l} - \sum_{l=0}^{\infty}f_l T_{s-l} T_{r+l} = -\frac{(1-q_1)(1-q_2)}{1-q_3^{-1}} \left( q_3^{-r} - q_3^{r} \right) \delta_{r+s,0}.
		\end{equation}
	\end{definition}
	Denote $T(z)= \sum_{r\in 1/2 + \mathbb{Z}} T_r z^{-r}$, $\dood(x) = \sum_{r\in 1/2 + \mathbb{Z}} x^r$
	Relation \eqref{defeq: twisted Vir} is equivalent to
	\begin{equation} 
	f(w/z) T(z) T(w) - f(z/w) T(w) T(z) = -\frac{(1-q_1)(1-q_2)}{1-q_3^{-1}} \left( \dood\left(\frac{w}{q_3 z}\right) -  \dood\left(\frac{q_3 w}{z}\right) \right).
	\end{equation}	
	\begin{theorem} \label{th: Vir twist}
		Formulas below determines an action of Twisted Deformed Virasoro algebra on $V(\Lambda_i)$ for $i=0, 1$
		\begin{equation} \label{eq: realization Vir twist}
		T(z) = (-1)^{1/2}\frac{q^{3/2}(q_1^{1/2}- q_1^{-1/2})}{\fp(q/q_1)}  \left( z \Psi_{-}^* (q q_{1} z)  \Phi_{+} (z)  +  \Psi_{+}^* (q q_1 z) \Phi_{-} ( z) \right).
		\end{equation}
	\end{theorem}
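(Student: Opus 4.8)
The plan is to mirror the proof of Theorem \ref{th: Vir non twist} line by line, the only genuinely new point being the emergence of the half-integer delta-function $\dood$. First I would recast \eqref{eq: realization Vir twist} in the matrix notation of Section \ref{Section: JM} as
\[
T(z) = (-1)^{1/2}\frac{q^{3/2}(q_1^{1/2}-q_1^{-1/2})}{\fp(q/q_1)}\,\Psi^*(qq_1z)\,\tilde\varepsilon_z\,\Phi(z),\qquad \tilde\varepsilon_z=\begin{pmatrix}0&1\\ z&0\end{pmatrix},
\]
so that the two terms of \eqref{eq: realization Vir twist} are exactly the $(-,+)$ and $(+,-)$ matrix entries. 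With this rewriting the entire chain \eqref{eq: calucaltion1}--\eqref{eq: calculation2} applies verbatim provided the off-diagonal matrix $\tilde\varepsilon_z$ obeys the same intertwining property as \eqref{eq: prop of epsilon}, namely $R(z/w)\,\tilde\varepsilon_z\otimes\tilde\varepsilon_w\,R^{-1}(z/w)=\tilde\varepsilon_z\otimes\tilde\varepsilon_w$.

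Verifying this identity is the clean structural step. On $v_+\otimes v_+$ and $v_-\otimes v_-$ the $R$-matrix \eqref{eq: Rmatrix formulas} acts as the identity, hence commutes with $\tilde\varepsilon_z\otimes\tilde\varepsilon_w$ there; on the middle block $\{v_+\otimes v_-,\,v_-\otimes v_+\}$ the operator $\tilde\varepsilon_z\otimes\tilde\varepsilon_w$ is $\left(\begin{smallmatrix}0&w\\ z&0\end{smallmatrix}\right)$, and a direct $2\times2$ computation shows it commutes with the middle block of $R(z/w)$, the only nontrivial condition being $\frac{(1-q^2)(z/w)}{1-q^2z/w}\,w=\frac{1-q^2}{1-q^2z/w}\,z$, which holds identically. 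This is precisely the analogue of \eqref{eq: prop of epsilon}, so the first of the three summands produced by Theorems \ref{Th: R-matrix for Phi} and \ref{Th: R-matrix for Psi} collapses, via Proposition \ref{prop: interchange Phi and Psi}, to $f(z/w)T(w)T(z)$ exactly as in \eqref{eq: calculation2}.

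The second and third summands again localize at $z=q^2w$ and $z=q^{-2}w$ and are evaluated by Corollary \ref{corol: special point for Psi} and Proposition \ref{prop: spec point phi}. Here the off-diagonal $\tilde\varepsilon_z$ produces the novelty: applying $\tilde\varepsilon_z\otimes\tilde\varepsilon_w$ to the delta-vector $q^{-1}v_-\otimes v_+-q^{-2}v_+\otimes v_-$ of Theorem \ref{Th: R-matrix for Phi} gives $q^{-1}w\,v_+\otimes v_- - q^{-2}z\,v_-\otimes v_+$, so that the two terms carry the asymmetric weights $w$ and $z$ instead of the symmetric $(zw)^{1/2}$ of the diagonal case. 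After isolating the combination of Corollary \ref{corol: special point for Psi} and substituting $z=q^2w$, the surviving scalar prefactor retains a net half-integer power, and the second summand becomes
\[
-\frac{1}{1-q^2}\,(zw)^{1/2}\,\delta(z,q^2w)=-\frac{1}{q(1-q^2)}\,\dood(q^2w/z),
\]
where I used the formal identity $z^{1/2}\delta(z/a)=a^{1/2}\dood(z/a)$. The third summand is handled identically with Proposition \ref{prop: spec point phi} and yields $+\frac{1}{q(1-q^2)}\dood(w/(q^2z))$. Thus the integer delta of the non-twisted case is replaced throughout by $\dood$, which is exactly the half-integer moding of $\Virt$.

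Finally I would reinstate the factor $\fp(q^2/(q_1q))\fp(q^{-2}/(q_1q))$ and the overall constant, just as after \eqref{eq: calculation2}. The extra $(-1)^{1/2}$ in the normalization \eqref{eq: realization Vir twist} squares to $-1$ and precisely absorbs the sign changes introduced by the off-diagonal matrix, so the coefficient of $\dood(w/(q_3z))-\dood(q_3w/z)$ comes out to $-\frac{(1-q_1)(1-q_2)}{1-q_3^{-1}}$, which is the defining relation of $\Virt$. I expect the main obstacle to be the careful bookkeeping of the half-integer powers in the two delta terms together with the verification that the operator products genuinely reduce to the regularized special-point expressions of Corollary \ref{corol: special point for Psi} and Proposition \ref{prop: spec point phi} (rather than to their individually singular pieces); the $R$-matrix commutation for $\tilde\varepsilon_z$, by contrast, is the painless structural input that lets the rest of the argument run parallel to the non-twisted proof.
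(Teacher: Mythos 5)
Your proposal is correct and follows essentially the same route as the paper's own proof: the same matrix rewriting with the off-diagonal $\varepsilon_z$, the observation that \eqref{eq: prop of epsilon} still holds (which the paper merely asserts and you verify explicitly), the same three-summand decomposition via Theorems \ref{Th: R-matrix for Phi} and \ref{Th: R-matrix for Psi}, and the same mechanism by which the asymmetric weights $z$, $w$ convert the integer delta into $\dood$, with the $(-1)^{1/2}$ squaring to absorb the flipped signs of the two delta terms. Your sign and coefficient bookkeeping agrees with the paper's (noting $\dood(x)=\dood(1/x)$), so nothing further is needed.
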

	Denote the obtained representation by $\RepVt$.
	\begin{proof}
		Let us rewrite \eqref{eq: realization Vir twist} in the matrix form
		\begin{equation}
		T(z) = (-1)^{1/2} \frac{q^{3/2}(q_1^{1/2}- q_1^{-1/2})}{\fp(q/q_1)} \Psi^*(q q_1  z)  \varepsilon_z \Phi(z), \text{ for } \varepsilon_z =  \begin{pmatrix} 0 & 1\\
		z & 0 \end{pmatrix}.
		\end{equation}
		The proof is very similar to the proof of Theorem \ref{th: Vir non twist}. A crucial point is that \eqref{eq: prop of epsilon} holds for the new $\varepsilon_z$. Hence RHS of \eqref{eq: calucaltion1} still can be presented as sum of three summands. The first summand is still given by \eqref{eq: calculation2}. The second summand without factor $\fp\left(\frac{z}{q_1 q w} \right) \fp \left(\frac{w}{q_1 q z} \right)$
		\begin{gather*}
		\left(q q_1 wz \right)^{\frac12} \!\left(\! z^{-\frac12}\pp(w/z) \Psi^{*, (1)}(q q_1  z)  \Psi^{*,(2)}(q q_1 w) \!\right)\! \varepsilon_z{\otimes} \varepsilon_w \!\left(\! (-q)^{\frac12} (q^{-1} v_-{\otimes}v_+{-}q^{-2} v_+{\otimes}v_-) \delta(z,q^2 w)\! \right)\!\! (-1)^{\partial}\\=
		\left(- q_1  \right)^{\frac12} \left( z^{-\frac12}\pp(w/z) \Psi^{*, (1)}(q q_1  z)  \Psi^{*,(2)}(q q_1 w) \right) \varepsilon_z \otimes \varepsilon_w \left(  q^{-1} v_-{\otimes}v_+ - q^{-2} v_+{\otimes} v_- \right) \dood(z/q^2 w) (-1)^{\partial}\\
		=\left(-q_1/z \right)^{\frac12}  \pp(w/z) \Psi^{*, (1)}(q q_1  z)  \Psi^{*,(2)}(q q_1 w)   \left( q^{-1} w  v_+{\otimes}v_- - w v_-{\otimes}v_+ \right) \dood(z/q^2 w) (-1)^{\partial}\\
		=q^{-1}w \times  \left(-q_1/z \right)^{\frac12}  \pp(w/z)  \left( \Psi^*_+(q q_1 z) \Psi^*_-(q q_1w) - q \Psi_-(q q_1 z) \Psi^*_+(q q_1w) \right)  \dood(z/q^2 w) (-1)^{\partial}\\=
		-q^{-1}w \frac{w^{-1}}{1-q^2}  \dood(z/q^2 w) =  -\frac{1}{q(1-q^2)}  \dood(z/q^2 w).
		\end{gather*}
		The third summand without factor $\fp\left(\frac{z}{q_1 q w} \right) \fp \left(\frac{w}{q_1 q z} \right)$
		\begin{gather*}
		q q_1 \left(wz \right)^{\frac12} \left( (-q)^{\frac12} (q v^*_- \otimes v^*_+ -  v^*_+ \otimes v^*_-) \delta(q^3 q_1 z, q q_1 w)  \right) \varepsilon_z \otimes \varepsilon_w \left( z^{-\frac12}\ff(w/z) \Phi^{(1)}(z)  \Phi^{(2)}(w) \right) (-1)^{\partial}\\=
		- (- q)^{-\frac12}  \left(  (q v^*_- \otimes v^*_+ -  v^*_+ \otimes v^*_-) \dood (q^2 z/w)  \right) \varepsilon_z \otimes \varepsilon_w \left( z^{-\frac12}\ff(w/z) \Phi^{(1)}(z)  \Phi^{(2)}(w) \right) (-1)^{\partial}\\=
		-(- q)^{-\frac12}  \left(  qz v^*_+ \otimes v^*_- - w v^*_- \otimes v^*_+  \right)  \left( z^{-\frac12}\ff(w/z) \Phi^{(1)}(z)  \Phi^{(2)}(w) \right) \dood (q^2 z/w) (-1)^{\partial}\\=
		qz (-qz)^{-\frac12}\ff(w/z) \left( q  \Phi_-(z) \Phi_+(w) - \Phi_+(z) \Phi_-(w) \right)   \dood (q^2 z/w) (-1)^{\partial}\\=
		qz \frac{1}{z q^2 (1-q^2)}  \dood (q^2 z/w)= \frac{1}{q (1-q^2)}  \dood (q^2 z/w).
		\end{gather*}
		The end of the proof is almost the same as in non-twisted case.
	\end{proof}
	
	\paragraph{Connection with Verma module.}
	\emph{Highest weight vector} $\vac$ in a representation of twisted deformed Virasoro algebra is defined by the following properties
	\begin{align} \label{eq: hw twisted case}
	T_r \vac =0 \quad \text{for $r>0$.}
	\end{align}
	\begin{proposition}
		The vectors $| \Lambda_i \rangle \in \RepVt$ are highest weight vectors.
	\end{proposition}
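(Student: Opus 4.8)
The plan is to show directly that $T(z)|\Lambda_i\rangle$ contains only \emph{positive} half-integer powers of $z$, that is $T(z)|\Lambda_i\rangle = O(z^{1/2})$. Since $T(z)=\sum_{r\in 1/2+\mathbb{Z}}T_r z^{-r}$, the vanishing of all coefficients of $z^{-r}$ with $r>0$ is exactly the highest weight condition \eqref{eq: hw twisted case}. I would treat the two summands of the realization \eqref{eq: realization Vir twist} separately, reducing each off-diagonal product $\Psi^*_{\mp}(qq_1z)\Phi_{\pm}(z)$ to the diagonal product $\Psi^*_-(qq_1z)\Phi_-(z)$, whose action on lowest weight vectors is already understood through \eqref{eq:half of highest weight minus}.

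For the second summand $\Psi^*_+(qq_1z)\Phi_-(z)$ I would first write it as a $q$-commutator. From \eqref{eq: Psi min def} one has $\Psi^*_+(z)=\Psi_-(q^2z)=[\Psi^*_-(z),x_0^+]_q$, and $\Phi_-(z)$ commutes with $x_0^+$ (the relation recalled in the proof of Proposition \ref{prop: interchange Phi and Psi}); a one-line manipulation then gives $\Psi^*_+(qq_1z)\Phi_-(z)=[\Psi^*_-(qq_1z)\Phi_-(z),x_0^+]_q$. Applying this to $|\Lambda_i\rangle$ and using the highest weight property $x_0^+|\Lambda_i\rangle=e_1|\Lambda_i\rangle=0$ cancels one term, leaving $-q\,x_0^+\,\Psi^*_-(qq_1z)\Phi_-(z)|\Lambda_i\rangle$. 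By \eqref{eq:half of highest weight minus} we have $\Psi^*_-(qq_1z)\Phi_-(z)|\Lambda_i\rangle=\mathrm{const}\cdot z^{-1/2}|\Lambda_i\rangle+O(z^{1/2})$; since $x_0^+=e_1$ is $z$-independent and annihilates $|\Lambda_i\rangle$, the leading $z^{-1/2}$-term dies and the whole summand is $O(z^{1/2})$.

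For the first summand $z\,\Psi^*_-(qq_1z)\Phi_+(z)$ I would use $\Phi_+(z)=[\Phi_-(z),x_0^-]_q$ from \eqref{eq: Phi min x_0} together with the fact that $\Psi^*_-(z)$ commutes with $x_0^-$ (again recalled in the proof of Proposition \ref{prop: interchange Phi and Psi}), which gives $\Psi^*_-(qq_1z)\Phi_+(z)=[\Psi^*_-(qq_1z)\Phi_-(z),x_0^-]_q$. Each of the two resulting terms is $\Psi^*_-(qq_1z)\Phi_-(z)$ applied to (a multiple of) a Heisenberg lowest weight vector --- namely $|\Lambda_i\rangle$ and $x_0^-|\Lambda_i\rangle$, the latter being $0$ or proportional to the lowest vector of a neighbouring charge sector --- so by \eqref{eq:half of highest weight minus} it is $O(z^{-1/2})$. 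Hence $\Psi^*_-(qq_1z)\Phi_+(z)|\Lambda_i\rangle=O(z^{-1/2})$, and the explicit prefactor $z$ promotes this to $O(z^{1/2})$. Note that here I need not identify the leading coefficient at all: the extra factor $z$ already guarantees positivity of the powers.

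Putting the two summands together yields $T(z)|\Lambda_i\rangle=O(z^{1/2})$, which is the assertion. The one genuinely delicate point --- and the step I expect to be the main obstacle --- is the control of the leading $z^{-1/2}$-term in the second summand: I must know not merely that $\Psi^*_-(qq_1z)\Phi_-(z)|\Lambda_i\rangle$ is $O(z^{-1/2})$, but that its $z^{-1/2}$-coefficient is \emph{proportional to $|\Lambda_i\rangle$ itself}, so that $e_1$ kills it. This is precisely the information encoded in the normal-ordering computation \eqref{eq:half of highest weight minus} (via \eqref{eq: Psi Phi norm ord}), and it is also the reason the diagonal product had to be singled out: the normally ordered operator $:\!\Psi^*_-(qq_1z)\Phi_-(z)\!:$ produces leading power $z^{-1/2}$ only on lowest vectors, where its annihilation part acts trivially.
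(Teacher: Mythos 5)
Your argument is correct, but it is a genuinely different proof from the paper's. The paper disposes of the statement by a soft grading consideration: recalling the principal grading \eqref{eq: deg fr1}--\eqref{eq: deg fr2}, it observes that $\RepVt$ is a graded $\Virt$-module with $\degp T_{-r}=r$, that $\degp$ is nonnegative on $V(\Lambda_i)$ with $\degp |0\rangle = \degp |1\rangle =0$ and $\degp \jvec >0$ for $j\neq 0,1$, so $T_r|\Lambda_i\rangle$ with $r>0$ would have negative degree and must vanish. You instead verify $T(z)|\Lambda_i\rangle = O(z^{1/2})$ by direct computation, reducing both off-diagonal products to the diagonal one via the $q$-commutator identities $\Psi^*_+(qq_1z)\Phi_-(z)=[\Psi^*_-(qq_1z)\Phi_-(z),x_0^+]_q$ and $\Psi^*_-(qq_1z)\Phi_+(z)=[\Psi^*_-(qq_1z)\Phi_-(z),x_0^-]_q$ (both legitimate, given \eqref{eq: Phi min x_0}, \eqref{eq: Psi min def} and the commutation of $\Phi_-$ with $x_0^+$ and of $\Psi^*_-$ with $x_0^-$ quoted in the proof of Proposition \ref{prop: interchange Phi and Psi}), and then invoking the leading-term computation \eqref{eq:half of highest weight minus}; in effect you transplant the method of the paper's proof of Proposition \ref{prop: highest weight non twist} from the non-twisted to the twisted case. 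The trade-off is clear: the paper's proof is three lines but hides the work in an unproved ``one can check'' (verifying the graded-module property itself requires inspecting the bosonized formulas), whereas yours is longer but self-contained at the level of relations already established, and it makes explicit exactly where the dangerous $z^{-1/2}$ terms die --- your correctly flagged key point that the $z^{-1/2}$ coefficient in \eqref{eq:half of highest weight minus} is proportional to $\jvec$ itself, so that $x_0^+=e_1$ annihilates it. One small step you assert without proof does need its one-line check: that $x_0^-|\Lambda_1\rangle$ equals $1\otimes e^{\Lambda_1-\alpha}=|-1\rangle$ exactly, with no Heisenberg tail (immediate from the bosonization of $X^-(z)$, since $x_0^-$ is the coefficient of $z^{-1}$ and the charge factor contributes $z^{-(\alpha,\Lambda_1)}=z^{-1}$). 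This is genuinely needed and not cosmetic: the estimate $\Psi^*_-(qq_1z)\Phi_-(z)v = O(z^{-1/2})$ holds only for pure charge vectors $v=\jvec$, on which the annihilation part acts trivially, and fails on vectors carrying Heisenberg excitations, so your reduction would break if $x_0^-|\Lambda_1\rangle$ had such a tail.
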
	
	\begin{proof}
		Recall the grading $\degp$ on $V(\Lambda_i)$ defined by \eqref{eq: deg fr1} and \eqref{eq: deg fr2}.  One can check that $\RepVt$ is a graded $\Virt$-module with respect to grading $\degp T_{-r}=r$. To finish the proof one has to note that $\degp |0 \rangle = \degp |1 \rangle=0$ and $\degp \jvec >0$ for $j \neq 0,1$.
	\end{proof}
	
	Verma module $\tVerm$ of twisted Virasoro algebra is a cyclic module with cyclic vector $\vac$ and without any other relations apart from \eqref{eq: hw twisted case}. Verma module enjoys a universal property: it maps to any module with a highest weight vector. Hence there exist a natural map $\phi_i \colon \tVerm \rightarrow \RepVt$ such that $\vac \mapsto | \Lambda_i \rangle $.
	
	\begin{lemma} \label{lemma: prePBW}
		Verma module $\tVerm$ is spanned by 
		\begin{equation} \label{eq: PBW element}
		T_{-r_m} \dots T_{-r_1} \vac \quad \text{for $0 <r_1 \leq r_2 \leq \dots \leq r_m$.}
		\end{equation}
	\end{lemma}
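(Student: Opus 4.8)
The plan is to prove this by a standard Poincaré--Birkhoff--Witt-type reordering argument, using the defining relation \eqref{defeq: twisted Vir} to rewrite any product of generators into the claimed ordered (non-decreasing indices) form. First I would observe that by the very definition of the Verma module $\tVerm$, it is spanned by all monomials $T_{-r_m}\cdots T_{-r_1}\vac$ with $r_1,\dots,r_m \in 1/2+\mathbb{Z}_{>0}$ (positive half-integers), without any constraint on the order of the indices: indeed, any generator $T_s$ with $s>0$ annihilates $\vac$, and $T_0$ does not occur since the indices are half-integers, so only negative-index generators survive when acting on the cyclic vector. Thus the only task is to show that the further ordering condition $r_1 \leq r_2 \leq \dots \leq r_m$ may be imposed.

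The key step is the reordering. I would introduce a lexicographic-style well-ordering on monomials: to each monomial $T_{-r_m}\cdots T_{-r_1}\vac$ assign its total degree $\sum_i r_i$ (which is preserved, since \eqref{defeq: twisted Vir} is homogeneous) together with a secondary statistic measuring the number of ``inversions'', i.e.\ pairs that violate the non-decreasing order. I would then argue by induction on these statistics. Given an out-of-order adjacent pair $T_{-r}T_{-r'}$ with $r>r'$ (i.e.\ $-r < -r'$, so the left index is more negative than it should be), I would use the defining relation \eqref{defeq: twisted Vir} to swap them. Extracting from \eqref{defeq: twisted Vir} the coefficient relating $T_{-r}T_{-r'}$ and $T_{-r'}T_{-r}$, the leading ($l=0$) term of each sum gives $f_0\,T_{-r}T_{-r'} - f_0\,T_{-r'}T_{-r}$ with $f_0 = 1$, so the commutator $T_{-r}T_{-r'} - T_{-r'}T_{-r}$ equals a correction consisting of: (i) terms $f_l\,T_{-r'-l}T_{-r+l}$ and $f_l\,T_{-r-l}T_{-r'+l}$ with $l \geq 1$, all of which have strictly fewer inversions or reduce to the same pair closer to balanced, plus (ii) a scalar delta-function term, which contributes only when $r'=-r$, i.e.\ only for the diagonal $r+r'=0$ case and hence produces a lower-length (scalar times $\vac$) contribution. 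I would check that each such correction term is either strictly smaller in the chosen ordering or of strictly shorter length, so the induction closes.

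The main obstacle I anticipate is bookkeeping the infinite sums in \eqref{defeq: twisted Vir} and confirming that the reordering induction is genuinely well-founded. Unlike the classical PBW setting, swapping $T_{-r}T_{-r'}$ does not produce finitely many shorter terms but an infinite tail $\sum_{l\geq 1} f_l(\cdots)$; I must verify that on any fixed graded component only finitely many terms survive (which holds because the total degree is fixed and each $T_{-s}$ contributes $s>0$, bounding the length $m$ and the available indices), and that every term in the tail strictly decreases the inversion statistic or the length. Concretely, the terms $T_{-r-l}T_{-r'+l}$ for $l \geq 1$ push one index more negative and the other less negative, so one must set up the ordering (for instance, comparing the sorted multisets of indices lexicographically from the largest) carefully enough that these tail terms count as strictly smaller. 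Once the well-foundedness is secured, the ordered monomials \eqref{eq: PBW element} span $\tVerm$, which is exactly the claim; I would not attempt the linear-independence direction, as the lemma asserts only spanning.
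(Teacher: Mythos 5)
Your overall strategy---straightening monomials by induction using the defining relation \eqref{defeq: twisted Vir}---is exactly the paper's proof, which is itself only a one-sentence sketch of this induction, and your attention to the finiteness of the infinite tails (via the grading) is the right mechanism. However, your opening reduction contains a genuine error: it is \emph{not} true ``by the very definition'' that $\tVerm$ is spanned by monomials in negative modes only. The paper defines $\tVerm$ as the cyclic module with no relations beyond \eqref{eq: hw twisted case}, i.e.\ the quotient of the whole algebra by the left ideal generated by the $T_r$ with $r>0$; since the algebra has no a priori triangular decomposition (establishing one is essentially the content of this lemma), $\tVerm$ is a priori spanned by monomials $T_{s_1}\cdots T_{s_k}\vac$ with indices of arbitrary sign, and an interior positive mode cannot be discarded until it has been commuted past all negative modes to its right using \eqref{defeq: twisted Vir}---the very infinite-tail straightening you develop afterwards. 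This omission also distorts your delta-term bookkeeping: among purely negative modes the Kronecker delta $\delta_{r+s,0}$ never fires (both indices are negative), so in the setting to which you restricted yourself your case (ii) is vacuous; the delta term contributes precisely in the step you skipped, when a positive mode crosses the opposite negative mode, and that is where the scalar, shorter-length corrections actually arise.

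Two further points on the well-foundedness you rightly flag. First, your order convention is inverted: in \eqref{eq: PBW element} the indices $-r_m \leq \cdots \leq -r_1$ are non-decreasing left to right, so a pair $T_{-r}T_{-r'}$ with $r > r'$ is already in the target order, and the swaps are needed when $r < r'$. Second, your candidate statistic (sorted multisets compared lexicographically from the largest) does not decrease in general: among the tail terms, those with large $l$ spread the two indices apart and are lexicographically \emph{larger}; one can check that such terms come out already in the correct local order, so a workable variant is to observe that every tail term which remains locally out of order has its index pair strictly closer together, i.e.\ is strictly smaller in the dominance order on index multisets of fixed degree and length, while the remaining terms are locally ordered, carry a positive mode, or are shorter. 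The cleanest repair of the whole argument is the standard nested induction: outer induction on the length $k$; then, to multiply an already-ordered monomial by one generator $T_n$, induct upward on $n$ with base $n$ sufficiently negative (where the result is automatically ordered, since the degree bound caps how negative the modes of an ordered monomial of fixed degree can be), the same degree bound making each tail finite because large positive inner modes annihilate shorter vectors by the outer induction hypothesis. With these repairs your proposal closes and coincides with the paper's intended argument.
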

	
	\begin{proof}[Sketch of a proof.]
		One can prove that any element $T_{s_1} \dots T_{s_k} \vac$ can be presented as a linear combination of vectors \eqref{eq: PBW element} using \eqref{defeq: twisted Vir} by induction.
	\end{proof}
	
	\begin{proposition}
		For generic $q_3$ the Verma module $\tVerm$ is irreducible. Natural maps $\phi_i \colon \tVerm \rightarrow \RepVt$ are isomorphisms.
	\end{proposition}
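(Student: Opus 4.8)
The plan is to match graded dimensions and then use a contravariant form, exactly in the spirit of the non-twisted Corollary. Both modules are graded by $\degp$ of \eqref{eq: deg fr1}--\eqref{eq: deg fr2}, for which $\degp T_{-r}=r$, the degree-$0$ component of $\RepVt$ is spanned by $|\Lambda_i\rangle$, and $\phi_i$ is a graded map. First I would compute the graded character of $\RepVt$. By \eqref{notatio: Vj} the space $V(\Lambda_i)$ is $\mathbb{C}[a_{-1},a_{-2},\dots]$ tensored with the lattice part $\bigoplus_n \mathbb{C}e^{\Lambda_i+n\alpha}$, so, writing $t$ for the variable recording $\degp$,
\begin{equation*}
\operatorname{ch}\RepVt=\frac{\sum_{n\in\mathbb{Z}} t^{\degp\left(e^{\Lambda_i+n\alpha}\right)}}{\prod_{k\ge1}(1-t^{k})},\qquad \degp\left(e^{\Lambda_i+n\alpha}\right)=\tfrac14 j(j-1),\ \ j\equiv i\bmod 2 .
\end{equation*}
A standard theta-function manipulation (Jacobi triple product) turns the numerator into a product and collapses the whole expression to $\prod_{m\ge1}(1-t^{m-1/2})^{-1}$, the same for $i=0$ and $i=1$. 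On the other hand Lemma \ref{lemma: prePBW} gives a spanning set of $\tVerm$ indexed by partitions into parts from $\tfrac12+\mathbb{Z}_{\ge0}$, whose generating function is precisely $\prod_{m\ge1}(1-t^{m-1/2})^{-1}$. Hence $\operatorname{ch}\tVerm\le\operatorname{ch}\RepVt$ coefficientwise, with equality exactly when \eqref{eq: PBW element} is a basis.

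To pin down irreducibility I would equip $\tVerm$ with the contravariant form $\langle\cdot,\cdot\rangle$ determined by $\langle\vac,\vac\rangle=1$ and the anti-involution $T_r\mapsto T_{-r}$, which preserves \eqref{defeq: twisted Vir}. Let $G_d$ be the Gram matrix of this form on the monomials \eqref{eq: PBW element} of degree $d$; its entries are rational in $q_1,q_2,q_3$, so $\det G_d$ is a rational function of $q_3$. The key claim is $\det G_d\not\equiv 0$, so that for generic $q_3$ it is nonzero. This single fact does everything at once: nonvanishing forces the monomials \eqref{eq: PBW element} to be linearly independent, hence a basis with $\operatorname{ch}\tVerm=\prod_{m\ge1}(1-t^{m-1/2})^{-1}$; and nondegeneracy of the form makes its radical, the maximal proper submodule of $\tVerm$, vanish, so $\tVerm$ is irreducible.

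The conclusion is then immediate: since $\phi_i(\vac)=|\Lambda_i\rangle\neq 0$, the kernel of $\phi_i$ is a proper graded submodule of the irreducible $\tVerm$, hence $0$, so $\phi_i$ is injective; as source and target now have equal finite graded dimensions in every degree, $\phi_i$ is an isomorphism. The main obstacle is proving $\det G_d\not\equiv0$. I would attack it either by a degeneration argument (evaluating at a limit of $q_3$ where the form reduces to a known nondegenerate Heisenberg pairing) or, bypassing the determinant entirely, by reading off from the bosonic formula \eqref{eq: realization Vir twist} that the images under $\phi_i$ of the monomials \eqref{eq: PBW element} are triangular — hence independent — against a Heisenberg–lattice basis of $\RepVt$; the latter route yields surjectivity of $\phi_i$ and equality of characters simultaneously. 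One may also invoke the expected toroidal-algebra description, as is done for the non-twisted algebra.
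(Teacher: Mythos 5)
Your skeleton --- the Gauss/Jacobi character identity for $\RepVt$, the spanning bound from Lemma \ref{lemma: prePBW}, and the final graded-dimension count --- coincides with the paper's, but at the decisive step you and the paper move in opposite directions, and your direction is where the gap sits. The paper never introduces a Shapovalov-type form on $\tVerm$: it proves irreducibility of the \emph{target} $\RepVt$ for generic $q_3$ by specializing to $q_3=1$, where the representation was identified and shown irreducible in \cite[Example 7.1, Section 7.3]{BG}, deduces surjectivity of $\phi_i$ (the image contains $|\Lambda_i\rangle$), and then the character comparison \eqref{eq: Gauss} forces $\phi_i$ to be an isomorphism, with irreducibility of $\tVerm$ obtained at the end as a corollary. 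You instead make irreducibility of the \emph{source} the first step, resting everything on the claim $\det G_d \not\equiv 0$ --- but that claim is essentially equivalent to generic irreducibility of $\tVerm$, i.e.\ it is the hard content of the proposition itself, and you give no proof. Nor is one available off the shelf: the determinant formula cited for the non-twisted Verma module (\cite[Th.~3.3]{Bouwknegt:1998}) relies on the \cite{SKAO} bosonization, and the paper emphasizes that no bosonization of the twisted algebra was previously known; the toroidal-algebra description you invoke as a last resort is explicitly only conjectural in this paper (``we expect'').

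Of your suggested repairs, neither is substantiated as stated. The triangularity of the images of the monomials \eqref{eq: PBW element} against a Heisenberg--lattice basis is not evident: in \eqref{eq: realization Vir twist} the two summands $z\,\Psi_-^*(qq_1z)\Phi_+(z)$ and $\Psi_+^*(qq_1z)\Phi_-(z)$ shift the lattice part by $e^{-\alpha}$ and $e^{+\alpha}$ respectively, so each mode $T_{-r}$ spreads a vector across Fock sectors in both directions and a naive leading-term argument does not obviously close. The degeneration route can in principle be completed, but only by importing exactly the paper's key input: you need one parameter value where nondegeneracy is known, the only available one is $q_3=1$ via \cite{BG}, and converting the $q_3=1$ irreducibility of $\RepVt$ into nonvanishing of $\det G_d$ at that point already requires running the paper's surjectivity-plus-character argument there first --- i.e.\ your plan, in its only completable branch, reduces to the paper's proof with the form added as an extra layer. (A smaller caveat: nondegeneracy of the contravariant form rules out proper \emph{graded} submodules; since the twisted algebra has no $T_0$, gradedness of arbitrary submodules needs a word, e.g.\ adjoining the grading derivation.)
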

	\begin{proof}
		The representation $\RepVt$ for $q_3=1$ was considered in \cite[Example 7.1]{BG}; it follows from \cite[Section 7.3]{BG} that the representations are irreducible. Hence $\RepVt$ is irreducible for generic $q_3$. Then maps $\phi_i \colon M^{\text{tw}} \rightarrow \RepVt$ are surjective. Now recall the Gauss identity
		\begin{equation} \label{eq: Gauss}
		\prod_{r=\frac12+\mathbb{Z}_{\geq 0}}  \frac{1}{1- \mathfrak{q}^r} =\sum_{j\in i+2\mathbb{Z}} \frac{\mathfrak{q}^{\frac{j(j-1)}{4}}}{\prod_{n=1}^{\infty} (1-\mathfrak{q}^n)} \quad \text{for $i=0,1$.}
		\end{equation}
		According to Lemma \ref{lemma: prePBW}, dimensions of graded components of $\tVerm$ do not exceed corresponding coefficient of LHS of \eqref{eq: Gauss}. On the other hand, coefficients of RHS of \eqref{eq: Gauss} are equal to dimensions of graded components of $\RepVt$. Hence, it follows from surjectivity of $\phi_i$ that $\phi_i$ is an isomorphism.
	\end{proof}	
	\begin{corollary}
		For generic $q_3$ vectors \eqref{eq: PBW element} form a basis of $\tVerm$.
	\end{corollary}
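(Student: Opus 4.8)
The plan is to deduce the statement purely from a graded dimension count, combining the spanning property already in hand with the isomorphism established in the previous proposition. First I recall that by Lemma~\ref{lemma: prePBW} the vectors \eqref{eq: PBW element} span $\tVerm$, so it only remains to check their linear independence. I work degree by degree with respect to the grading $\degp T_{-r}=r$, under which $\tVerm=\bigoplus_{n}\tVerm_n$ with each $\tVerm_n$ finite dimensional.

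The key observation is that the spanning monomials are counted by the two sides of the Gauss identity \eqref{eq: Gauss}. On one hand, a monomial $T_{-r_m}\cdots T_{-r_1}\vac$ with $0<r_1\leq\cdots\leq r_m$ and $r_1+\cdots+r_m=n$ is precisely a partition of $n$ into parts drawn from $1/2+\mathbb{Z}_{\geq 0}$; hence the number of such monomials of projective degree $n$ equals the coefficient of $\mathfrak{q}^n$ in the left-hand side $\prod_{r\in 1/2+\mathbb{Z}_{\geq 0}}(1-\mathfrak{q}^r)^{-1}$ of \eqref{eq: Gauss}. On the other hand, the preceding proposition provides, for generic $q_3$, an isomorphism $\phi_i\colon\tVerm\xrightarrow{\sim}\RepVt$, so that $\dim\tVerm_n=\dim(\RepVt)_n$, and the latter is by construction the coefficient of $\mathfrak{q}^n$ in the right-hand side of \eqref{eq: Gauss}.

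Applying the Gauss identity \eqref{eq: Gauss} equates these two coefficients: in each projective degree $n$ the number of spanning monomials \eqref{eq: PBW element} equals $\dim\tVerm_n$. Since a spanning set of a finite-dimensional vector space whose cardinality equals the dimension is automatically linearly independent, the monomials of each fixed degree form a basis of $\tVerm_n$, and assembling over all $n$ yields the claimed basis of $\tVerm$.

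There is essentially no analytic difficulty here; the substantive content was already packaged into the previous proposition, namely the inequality $\dim\tVerm_n\leq$ the coefficient of the left-hand side of \eqref{eq: Gauss} coming from Lemma~\ref{lemma: prePBW}, together with surjectivity of $\phi_i$ and the Gauss identity forcing equality. The only point requiring care is the bookkeeping of the grading: one must confirm that the PBW monomials are enumerated precisely by the left-hand product of \eqref{eq: Gauss} and that $\degp$ is exactly the grading under which $\phi_i$ is a graded isomorphism, so that the two coefficient counts refer to the same degree $n$.
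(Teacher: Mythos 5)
Your proof is correct and follows essentially the same route as the paper: the corollary is immediate from the proof of the preceding proposition, where Lemma \ref{lemma: prePBW} bounds the graded dimensions of $\tVerm$ by the coefficients of the left-hand side of the Gauss identity \eqref{eq: Gauss}, while the isomorphism $\phi_i$ identifies them with the coefficients of the right-hand side, forcing the spanning monomials \eqref{eq: PBW element} to be linearly independent in each degree. Your bookkeeping of the $\degp$-grading matches the paper's.
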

	\appendix

	\bibliographystyle{alpha}
	\bibliography{bibtex}
	
	\noindent \textsc{Landau Institute for Theoretical Physics, Chernogolovka, Russia,\\
		Center for Advanced Studies, Skolkovo Institute of Science and Technology, Moscow, Russia,\\
		National Research University Higher School of Economics, Moscow, Russia,\\
		Institute for Information Transmission Problems, Moscow, Russia,\\
		Independent University of Moscow, Moscow, Russia}
	
	\emph{E-mail}:\,\,\textbf{mbersht@gmail.com}\\

	\noindent\textsc{Center for Advanced Studies, Skolkovo Institute of Science and Technology, Moscow, Russia\\
		National Research University Higher School of Economics, Moscow, Russia}
	
	\emph{E-mail}:\,\,\textbf{roma-gonin@yandex.ru}

\end{document}